\newcommand{\be}{\begin{eqnarray}}
\newcommand{\ee}{\end{eqnarray}}
\newcommand{\om}{\Omega}
\newcommand{\1}{{\bf 1}}
\newcommand{\tr}{{\rm tr}\,}
\newcommand{\cof}{{\rm cof}\,}
\newcommand{\weakstar}{\stackrel{*}{\rightharpoonup}}
\newcommand{\eps}{\epsilon}
\newcommand{\sch}{\mathcal{H}}
\newcommand{\sca}{\mathcal{A}}
\newcommand{\scl}{\mathcal{L}}
\newcommand{\et}{e_{\scriptscriptstyle{\theta}}}
\newcommand{\er}{e_{\scriptscriptstyle{R}}}
\def\Xint#1{\mathchoice
   {\XXint\displaystyle\textstyle{#1}}%
   {\XXint\textstyle\scriptstyle{#1}}%
   {\XXint\scriptstyle\scriptscriptstyle{#1}}%
   {\XXint\scriptscriptstyle\scriptscriptstyle{#1}}%
   \!\int}
\def\XXint#1#2#3{{\setbox0=\hbox{$#1{#2#3}{\int}$}
     \vcenter{\hbox{$#2#3$}}\kern-.5\wd0}}
\def\dashint{\Xint-}
\newtheorem{thm}{Theorem}[section]
\newtheorem{prop}{Proposition}[section]
\newtheorem{lemma}{Lemma}[section]
\newtheorem{defn}{Definition}[section]
\newtheorem{rem}[prop]{Remark}
\numberwithin{equation}{section}
\title{Explicit examples of Lipschitz, one-homogeneous solutions of $\log$-singular planar elliptic systems}
\author{J. Bevan 
\footnote{Department of Mathematics, University of Surrey, Guildford, Surrey,  GU2 7XH, UK.  tel: +44 (0)1483 682620. email: j.bevan@surrey.ac.uk}}
\begin{document}
\markright{One-homogeneous Lipschitz solutions}
\maketitle
\begin{abstract}\noindent  We give examples of systems of Partial Differential Equations that admit non-trivial, Lipschitz and one-homogeneous solutions in the form $u(R,\theta) = Rg(\theta)$, where $(R,\theta)$ are plane polar coordinates and $g: \mathbb{R}^{2} \to \mathbb{R}^{m}$, $m \geq 2$.   The systems are singular in the sense that they arise as the Euler-Lagrange equations of the functionals $I(u) = \int_{B}W(x,\nabla u(x))\,dx$, where
$D_{F}W(x,F)$ behaves like $\frac{1}{|x|}$ as $|x| \to 0$ and $W$ satisfies an ellipticity condition.   Such solutions cannot exist when 
\newline $|x|D_{F}W(x,F) \to 0$ as $|x| \to 0$, so the condition is optimal.   The associated analysis exploits the well-known Fefferman-Stein duality \cite{FS}.   We also discuss conditions for the uniqueness of these one-homogeneous solutions and demonstrate that they are minimizers of certain variational functionals.  
\end{abstract}

\maketitle

\section{Introduction}  

This paper exhibits explicit Lipschitz one-homogeneous maps $u:\mathbb{R}^{2} \to \mathbb{R}^{m}$ as solutions to certain systems of nonlinear Partial Differential Equations.
In terms of plane polar coordinates, such maps are of the form $u(R,\theta) = Rg(\theta)$, where $g$ is a Lipschitz function taking values in $\mathbb{R}^{m}$.  The system of nonlinear PDE is
\begin{equation}\label{el2} \partial_{x_{j}}(\partial_{F_{ij}}W(x,\nabla u(x))) = 0,  \ \ i = 1,2,\end{equation}
where the summation convention is understood; they are a componentwise form of the Euler-Lagrange equations of the integral functional  
\[ I(u) = \int_{B_{r}} W(x, \nabla u(x)) \,dx. \]
The integrand $W: B_{r} \times \mathbb{R}^{m \times 2}$ can be written 
\begin{equation}\label{w} W(x,F) = f(|F|) + \sum_{1 \leq i < j \leq m} \lambda_{ij} \ln(|x|) \, \det F^{(i,j)},\end{equation}
where all $\lambda_{ij}$ are constant,  
\[F^{(i,j)} = \left(\begin{array}{ll} F_{i1} & F_{i2} \\ F_{j1} & F_{j2} \end{array}\right)\] 
for $1 \leq i < j \leq m$, $B_{r}$ is the ball with  centre $0$ and radius $r$ in $\mathbb{R}^{2}$, and  $f:[0,\infty) \to \mathbb{R}$ is a suitably differentiable function.       We henceforth  write
\[ \gamma(F) = f(|F|). \]

The significance of such a result is twofold.  Firstly, non-trivial one-homogeneous solutions of the systems \eqref{el2} have long been sought after, and in several cases found, in the context of regularity theory, beginning with the work \cite{Ne75} of Ne\v{c}as.    Here, and in \cite{HLN},\cite{SY00}, one-homogeneous solutions are in fact minimizers of variational integrals of the form $\tilde{I}(u) = \int_{\om} W(\nabla u(x)) \,dx$ for an appropriate function $W$,  a condition implying stationarity.     See also \cite{SY02} for nonsmooth minimizers which are not one-homogeneous, but which are related to and improve upon the examples in \cite{SY00}.      The domain dimension $n$ in all these examples is at least $3$.    In contrast, Phillips showed in \cite{Ph02} that one-homogeneous stationary points of functionals like  $\tilde{I}$ with $n = 2$ are not possible:  the claim in this paper is that they are, provided we allow the integrand $W$ to depend on $x$ as well as $\nabla u(x)$.     If we do not 
insist on one-homogeneity then in two and higher dimensions \cite{MS03} and \cite{Sz04} have shown that stationary points can in general be nowhere $C^{1}$, which is an extreme form of singularity.  These solutions are constructed iteratively and as such are not explicit, an advantage which the mappings we present here do enjoy.  The price apparently to be paid for this explicitness is in the $x$-dependence of the integrands $W$ defined in \eqref{w} above.

We briefly review one-homogeneous functions and the type of singularity they can produce. By definition, a positively one-homogeneous (henceforth one-homogeneous) function $u: \mathbb{R}^{n} \to \mathbb{R}^{m}$ satisfies $u(\lambda x) = \lambda u(x)$ for all $x \in \mathbb{R}^{n}$ and all $\lambda \geq 0$, whence the representation $u(x) = Rg(\theta)$ with $g(\theta) := u(\cos \theta, \sin \theta)$ and $R=|x|$.   We also recall that a non-trivial one-homogeneous function is by definition one that is not linear.   When it exists, the weak derivative $\nabla u$ of a one-homogeneous function $u$ satisfies
\[\nabla u(x) = u(\psi(x)) \otimes \psi(x) + \nabla u(\psi(x)) - (\nabla u(\psi(x)) \psi(x)) \otimes \psi(x), \]
where $\psi(x) = \frac{x}{|x|}$.    In terms of polar coordinates, 
\[ \nabla u (R, \theta) = g(\theta) \otimes e_{\scriptscriptstyle{R}}(\theta) + g'(\theta) \otimes \et(\theta), \] 
where $\er(\theta) = (\cos \theta, \sin \theta)^{T}$ and $\et(\theta) = (-\sin \theta, \cos \theta)^{T}$.
 The gradient clearly depends only on the angular part $g(\theta)$ of $u$, so that, provided $u$ is not linear, $\nabla u$ is discontinuous at the origin.  It is in this sense that non-trivial one-homogeneous functions are singular. 

Secondly, it confirms that one of the hypotheses in the recent result \cite{Be11}[Theorem 2.1] is sharp.
We restate that result here for the reader's benefit. 
\begin{thm}[Theorem 2.1, \cite{Be11}]\label{Be11a} Let $u$ be a one-homogeneous function belonging to the class $W^{1,2}(B, \mathbb{R}^{m})$ and satisfying 
\begin{equation}\label{frog1} \int_{\om} A(x, \nabla u(x)) \cdot \nabla \varphi(x) \, dx = 0 \ \ \forall \varphi \in C_{c}^{1}(B,\mathbb{R}^{m}), \end{equation}  
where $A$ satisfies
\begin{itemize}
 \item[\textrm{(H1)}]  
$A(x,F)$ is $C^{1}$ and uniformly elliptic in the gradient argument $F$, i.e., for some fixed $\nu > 0$
\[\frac{\partial A_{ij}}{\partial F_{rs}} (x,F) a_{i}b_{j}a_{r}b_{s} \geq \nu |a|^{2}|b|^{2}\] for all $a \in \mathbb{R}^{m}$ and $b \in \mathbb{R}^{2}$; 
\item[\textrm{(H2)}] $|x|\partial_{x_{i}}A(x, F)$ is continuous on $(B\setminus\{0\}) \times \mathbb{R}^{m \times 2}$ for $i=1,2$;
\item[\textrm{(H3)}] $\lim_{|x| \to 0} |x| \partial_{x_{i}}A(x, \nabla u) = 0$ for $i=1,2$.\end{itemize}
Then $u$ is linear.
\end{thm}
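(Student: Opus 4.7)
The plan is to exploit the scale-invariance of the one-homogeneous $u$ to reduce \eqref{frog1} to an autonomous PDE in the blow-up limit, and then to invoke Phillips' planar rigidity theorem \cite{Ph02} to conclude linearity. Because $u(\rho y) = \rho u(y)$, the rescaling $u^\rho(y) := u(\rho y)/\rho$ coincides with $u$ for every $\rho > 0$, and $\nabla u^\rho = \nabla u$ by $0$-homogeneity of the gradient. Changing variables $x = \rho y$ in \eqref{frog1} and rescaling the test function shows that $u$ weakly satisfies
\[
\int_{B_{1/\rho}} A(\rho y, \nabla u(y)) \cdot \nabla \varphi(y) \, dy = 0 \qquad \forall \varphi \in C_c^1(B_{1/\rho}, \mathbb{R}^m),
\]
for every small $\rho > 0$.

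Letting $\rho \to 0$, one seeks an autonomous limit equation $\int A_\infty(\nabla u(y)) \cdot \nabla \varphi(y)\, dy = 0$ with $A_\infty$ depending only on $F$. This is where (H3) enters: since $\nabla u(R,\theta) = g(\theta) \otimes e_R(\theta) + g'(\theta) \otimes e_\theta(\theta)$ is constant along any ray $\{\theta = \theta_0\}$, specialising (H3) to this ray yields $|\rho y|\,\partial_y A(\rho y, \nabla u(y)) \to 0$ as $\rho \to 0$, so the spatial oscillation of $A(\rho\cdot, F)$ along the cone of relevant gradient values becomes negligible in the limit. Combined with the continuity provided by (H2) and the uniform ellipticity (H1), a compactness argument should produce an $x$-independent, uniformly elliptic $A_\infty$, whence $u$ is a one-homogeneous $W^{1,2}$-solution of the autonomous planar uniformly elliptic system $\Div A_\infty(\nabla u) = 0$. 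Phillips' theorem \cite{Ph02} then forces $u$ to be linear.

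The principal obstacle is the extraction of the autonomous limit $A_\infty$. Hypothesis (H3) furnishes only the pointwise bound $|\partial_x A(x, \nabla u(x))| = o(|x|^{-1})$, which is integrally borderline and in principle permits $A(x, \nabla u(x))$ to exhibit slow, logarithmic blow-up as $x \to 0$; consequently the pointwise limit of $A(\rho y, \cdot)$ may fail to exist. One must therefore pass to the limit carefully---either by subtracting a (possibly divergent) curl-free correction that is invisible to the weak equation, or by taking a weak-$\ast$ subsequential limit---and verify that the resulting $A_\infty$ retains enough ellipticity to trigger the autonomous rigidity of \cite{Ph02}.
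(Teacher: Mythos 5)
First, a point of comparison: the present paper does not prove Theorem \ref{Be11a} at all — it is imported verbatim from \cite{Be11}, and is used here only (via Lemma \ref{bach0}) to show that (H3) is sharp. The proof in \cite{Be11} is a direct argument on the equation satisfied by the angular part $g(\theta)$ of $u$, obtained by testing with separated test functions and using (H1)--(H3) to control the resulting terms as $R\to 0$; it is not a blow-up/compactness argument.

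Your rescaling step is correct, but the proposal fails exactly at the obstacle you yourself flag, and that obstacle is not a technicality to be deferred — it is the entire content of the theorem. (H3) yields only $|\partial_x A(x,\nabla u(x))| = o(1/|x|)$, which is not integrable along rays, so $A(\rho y,\cdot)$ genuinely need not converge. Worse, the paper's own counterexample shows why no soft argument can close the gap: for $A(x,F)=D\gamma(F)+\lambda\ln|x|\,\cof F$ one has $A(\rho y,F)=A(y,F)+\lambda\ln\rho\,\cof F$, i.e.\ the logarithm is scale-invariant modulo a null Lagrangian, so the spatial dependence survives the blow-up \emph{unchanged} and the ``limit'' system is the original non-autonomous one. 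Any proof along your lines must therefore quantify precisely how (H3) excludes such surviving radial and angular dependence, and that quantification \emph{is} the proof. Neither of your proposed repairs supplies it: subtracting a divergent curl-free correction presupposes a null-Lagrangian splitting of $A$ that is not among the hypotheses; and a weak-$\ast$ subsequential limit of $y\mapsto A(\rho_j y,\nabla u(y))$ — for which no a priori bound is even available, since only derivatives of $A$ are controlled — produces a function of $y$, not a composition $A_\infty(\nabla u(y))$ with an autonomous elliptic $A_\infty$; weak limits do not commute with the nonlinear dependence on $F$, so neither the autonomy nor the ellipticity of a putative $A_\infty$ can be extracted this way. Finally, Phillips' rigidity theorem, as invoked in this paper, concerns one-homogeneous stationary points of autonomous functionals $\int W(\nabla u)\,dx$; since $A(x,\cdot)$ is not assumed to be a gradient in $F$, even a successful limit would need to be shown to have variational structure (or you would need a non-variational version of Phillips' result), a further point the sketch does not address.
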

The functions $W$ defined above can be chosen so that 
\[A(x,F):=D_{F}W(x,F)\] solves \eqref{frog1} with $u$ a suitable non-trivial one-homogeneous function, and such that it obeys conditions (H1) and (H2) while violating (H3).   We infer from this that condition (H3) is necessary.  See Lemma \ref{bach0} for details.

It is natural to ask whether there are circumstances under which the one-homogeneous solutions, $\bar{u}$, say, that we construct are unique.  By studying the stationarity condition \eqref{el2} in the planar case, we give a necessary and, under some additional  assumptions, sufficient condition for uniqueness.   See Propositions \ref{Bach4} and \ref{bachprop3} for details.

In view of the fact that the $\bar{u}$ solves an Euler-Lagrange equation, it is also natural to ask whether these solutions arise as minimizers of appropriate variational problems.  It turns out that they do in least two cases:
one corresponding to a problem in which functions $u$ competing in the minimization process are constrained to satisfy $\det \nabla u = 1$ a.e. (see Section \ref{con}), and another corresponding to an unconstrained problem (see Section \ref{uncon}) which has some remarkable similarities to a system constructed by  Meyers in \cite{Meyers63}.   

The paper is accordingly divided into three parts:  Section \ref{beep} gives the construction of a general class of one-homogeneous solutions to the PDE problem \eqref{el2}; Section \ref{uniqueII} considers among other things the question of uniqueness referred to above, and Section \ref{calcvar} is devoted to a variational 
interpretation of the results of Section \ref{beep}.  

\section{One-homogeneous solutions}\label{beep}

\subsection{Notation}
We denote the $m \times n$ real matrices by $\mathbb{R}^{m \times n}$, and unless stated otherwise we sum over repeated indices. 
Other standard notation includes $||\cdot ||_{k,p;\om}$ for the norm on the Sobolev space $W^{k,p}(\om)$, $||\cdot||_{p;\om}$ for the norm on $L^{p}(\om)$, and $\rightharpoonup$, $\weakstar$ to represent weak and weak$\ast$ convergence respectively in both of these spaces.   Here, $\om$ is a domain in $\mathbb{R}^{n}$.  As usual, we denote by $B(a,R)$ the ball in $\mathbb{R}^{n}$ centred at $a$ with radius $R$.  When the ball has centre zero and radius $r$ we write $B_{r}$, and when the radius is $1$ we simply write $B$ for $B_{1}$.   $\sch^{1}(\om)$ represents the Hardy space dual to $BMO(\om)$, the space of functions of Bounded Mean Oscillation (see \cite{FS,CLMS}).   In keeping with the general literature, we use  $\sch^{1}$ and $\scl^{2}$ to represent one-dimensional Hausdorff measure and two-dimensional Lebesgue measure respectively.  It will be clear from the context whether $\sch^{1}$ refers to Hardy space or $1-$ dimensional Hausdorff measure.   Unless stated otherwise, the letters a.e. refer to $\
scl^{2}-$almost everywhere.

The tensor product of two vectors $a \in \mathbb{R}^{m}$ and $b \in \mathbb{R}^{n}$ is written $a \otimes b$ and is the $m \times n$ matrix whose $(i,j)$ entry is $a_{i}b_{j}$.   The inner product of two matrices $F_{1},F_{2} \in \mathbb{R}^{m \times n}$ is $F_{1} \cdot F_{2} = \tr(F_{1}^{T}F_{2})$.   This obviously holds for vectors too.   Throughout, we use this inner product to define the norm $|F|$ on matrices $F$ via $|F|^{2} = F \cdot F$.

In plane polar coordinates $(R,\theta)$ the gradient of $\varphi: \mathbb{R}^{2} \to \mathbb{R}^{m}$ is
\begin{equation}\label{ravelI} \nabla \varphi = \varphi,_{\scriptscriptstyle{R}} \otimes \er(\theta) + \varphi,_{\tau} \otimes \et(\theta),\end{equation}
where $\er(\theta) = (\cos \theta, \sin \theta)^{T}$, $\et(\theta) = (-\sin \theta, \cos \theta)^{T}$ and 
\[\varphi,_{\tau} = \frac{1}{R}\varphi,_{\theta}.\]
We write $\varphi,_{\scriptscriptstyle{R}} =$ for the partial  derivative of $\varphi$ with respect to $R$, and similarly for $\varphi,_{\theta}$. In this notation the formula
\begin{equation}\label{det}\det \nabla \varphi = J \varphi,_{\scriptscriptstyle{R}} \cdot \varphi,_{\tau}\end{equation}
holds, where $J$ is the $2 \times 2$ matrix corresponding to a rotation of $\frac{\pi}{2}$ radians in the plane, i.e.,
\[ J = \left( \begin{array}{l l} 0 & 1 \\ -1 & 0 \end{array}\right).\]
Two useful properties of $J$ are that (i) $J^{T} = - J$, so that in particular $a \cdot Jb = - Ja \cdot b$ for any two $a,b \in \mathbb{R}^{2}$, and (ii) $\cof A = J^{T} A J$ for any $2 \times 2$ matrix $A$.    
For any set  $E$ we write $\chi_{\scriptscriptstyle{E}}$ for the characteristic (or indicator) function  of $E$.  If $E$ is an $\mathcal{L}^{2}-$measurable set and $g$ a measurable function then the integral average of $g$ over $E$ is 
\[\dashint_{E} g(x) \,dx = \frac{1}{\mathcal{L}^{2}(E)}\int_{E}g(x) \,dx.\]
A similar definition holds with $\mathcal{H}^{1}$ in place of $\mathcal{L}^{2}$.


\subsection{Construction of general one-homogeneous solutions}
 
Let $W$ be as in \eqref{w}.  

\begin{defn}(Critical or Stationary point) We say that $u$ is a critical or stationary point of the functional $I(u)=\int_{B}W(x,\nabla u(x))\,dx$, where $W$ is given by \eqref{w}, if 
\begin{equation}\label{elwk} \int_{B} D\gamma(\nabla u)\cdot \nabla \varphi + \sum_{1\leq i < j \leq m} \lambda_{ij} \ln R \ \cof (\nabla u)^{(i,j)} \cdot \nabla \varphi^{(i,j)} \, dx = 0\end{equation}
holds for all $\varphi \in C^{\infty}_{c}(B,\mathbb{R}^{m})$.\end{defn}

Thus  $u$ is a critical point of $I$ if $u$ solves the weak form of the Euler-Lagrange equations \eqref{el2} for the functional $I$.  Note that the weak form makes sense provided both $\ln(R) \, \nabla u$ and $D\gamma(\nabla u)$ belong to $L^{1}(B)$, so for now we assume that this is the case.

The aim of the next technical lemma is to rigorously convert \eqref{elwk} into the weak form of the equation
\begin{equation}\label{Luitpold}  \frac{f'(c)}{c} \bigtriangleup \!u + \Lambda u,_{\tau} = 0, \end{equation}
where $c$ is a constant such that $|\nabla u| = c$ a.e., and $\Lambda \in \mathbb{R}^{m \times m}$ is the antisymmetric matrix defined in \eqref{defLambda} below.   When $u$ is one-homogeneous this equation simplifies considerably and can be solved for any choice of the coefficients $\lambda_{ij}$: see Proposition \ref{Hechler} below.

\begin{lemma}\label{froebe}  Suppose that the function $u$ belongs to $W^{1,1}(B, \mathbb{R}^{m})$ and satisfies $|\nabla u|\ln(2+|\nabla u|)) \in L^{1}(B)$.  
\begin{itemize}\item[(a)] For $1 \leq i < j \leq m$ let 
\[ H_{ij}(F) = \det F^{(i,j)}. \]
Then
\begin{equation}\label{gudegast}\int_{B} \ln R \, DH_{ij}(\nabla u) \cdot \nabla \varphi  \,dx =  \int_{B}\left(u_{i,_{\tau}}\varphi_{j} - u_{j,_{\tau}}\varphi_{i}\right)\,\frac{dx}{R}\end{equation}
for all $\varphi \in C_{c}^{\infty}(B,\mathbb{R}^{m})$.
\item[(b)] The Euler-Lagrange equation \eqref{elwk} becomes
\[ \int_{B} D\gamma(\nabla u)\cdot \nabla \varphi -  \Lambda u_{,_{\tau}} \cdot \frac{\varphi}{R} \,dx = 0 ,\]
where the constant $m \times m$ matrix $\Lambda$ is defined by 
\begin{equation}\label{defLambda}\Lambda_{ij} = \left\{\begin{array}{l l} \lambda_{ij} & \textrm{if} \ i < j \\
0 & \textrm{if} \  i = j \\
-\lambda_{ji} & \textrm{if} \ i > j\end{array}\right.\end{equation}
\end{itemize}
\end{lemma}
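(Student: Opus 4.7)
The plan is to prove (a) first for smooth $u$, using integration by parts together with the null-Lagrangian (Piola) identity, and then to remove the smoothness hypothesis by mollification. The key technical observation is that under the hypothesis $|\nabla u|\log(2+|\nabla u|)\in L^{1}(B)$, both sides of the identity can be recognised as Hardy--BMO pairings, which is the tool needed to pass to the limit. Part (b) then follows from (a) by a direct re-indexing that exploits the antisymmetry of $\Lambda$ prescribed by \eqref{defLambda}.

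For smooth $u$ and $\varphi\in C^{\infty}_{c}(B,\mathbb{R}^{m})$, the 2D Piola identity $\partial_{l}(\cof\nabla u^{(i,j)})_{kl}=0$ lets every derivative be moved off the cofactor during integration by parts on $B\setminus B_{\delta}$, and the boundary contribution on $\partial B_{\delta}$ vanishes as $\delta\to 0$ since $|\ln\delta|\cdot 2\pi\delta\to 0$. Using $\partial_{l}\ln R=(\er)_{l}/R$, this reduces the LHS of \eqref{gudegast} to
\[
-\int_{B}\tfrac{1}{R}\bigl(\cof\nabla u^{(i,j)}\,\er\bigr)_{k}\,\varphi^{(i,j)}_{k}\,dx.
\]
A short $2\times 2$ calculation shows $\cof M\,\er=JM\,\et$, and since $(\nabla v)\,\et=v,_{\tau}$, one obtains $\cof\nabla u^{(i,j)}\,\er=(u_{j,_{\tau}},-u_{i,_{\tau}})^{T}$; substitution yields exactly the right-hand side of \eqref{gudegast}.

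To remove the smoothness assumption I would mollify $u^{\varepsilon}:=u*\rho_{\varepsilon}$ and pass to the limit. The subtlety is that neither side of \eqref{gudegast} is in general an absolutely convergent Lebesgue integral: the LHS pairs $\ln R$ with the bilinear object $\cof\nabla u^{(i,j)}\cdot \nabla\varphi^{(i,j)}$, which via $\cof A\cdot B=\det(A+B)-\det A-\det B$ decomposes as
\[
\det\nabla(u^{(i,j)}+\varphi^{(i,j)})-\det\nabla u^{(i,j)}-\det\nabla\varphi^{(i,j)};
\]
and an analogous integration-by-parts manipulation (exploiting $(u_{i,_{\tau}}\varphi_{j}-u_{j,_{\tau}}\varphi_{i})/R=\nabla\ln R\cdot J(\varphi_{j}\nabla u_{i}-\varphi_{i}\nabla u_{j})$) recasts the RHS as an integral of $\ln R$ against a sum of Jacobians of the planar maps $(u_{i},\varphi_{j})^{T}$ and $(u_{j},\varphi_{i})^{T}$. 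Thus both sides are integrals of $\ln R$ against Jacobians whose ingredient gradients lie in $L\log L$ (by the hypothesis on $u$ and the smoothness of $\varphi$). By the Coifman--Lions--Meyer--Semmes theorem each such Jacobian lies in the Hardy space $\sch^{1}$, and since $\ln R\in BMO$, the Fefferman--Stein pairing \cite{FS} gives both sides of \eqref{gudegast} a well-defined meaning and delivers continuity along the mollified sequence $u^{\varepsilon}\to u$.

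For (b), substitute (a) into \eqref{elwk} with weights $\lambda_{ij}$ and compute
\[
\sum_{1\leq i<j\leq m}\lambda_{ij}\bigl(u_{i,_{\tau}}\varphi_{j}-u_{j,_{\tau}}\varphi_{i}\bigr)\;=\;-\,\Lambda u_{,_{\tau}}\cdot\varphi
\]
by splitting $\sum_{i,j}\Lambda_{ij}u_{j,_{\tau}}\varphi_{i}$ into its $i<j$ and $i>j$ contributions and invoking $\Lambda_{ji}=-\lambda_{ij}$. The principal obstacle throughout is the rigorous passage from smooth to merely $W^{1,1}$ mappings in (a) under the singular weight $\ln R$; Hardy--BMO duality, enabled by the $L\log L$ hypothesis, is the essential tool, and its use is what explains the appearance of that hypothesis.
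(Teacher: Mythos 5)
Your integration by parts for smooth $u$ is correct (the identities $\cof M\,\er = JM\,\et$ and $\cof\nabla u^{(i,j)}\er=Ju^{(i,j)}_{,\tau}$ match the paper's computation), and the re-indexing for part (b) using the antisymmetry of $\Lambda$ is fine. The gap is in the approximation step, and it is a genuine one: your decomposition
\[
\cof\nabla u^{(i,j)}\cdot \nabla\varphi^{(i,j)}=\det\nabla\bigl(u^{(i,j)}+\varphi^{(i,j)}\bigr)-\det\nabla u^{(i,j)}-\det\nabla\varphi^{(i,j)}
\]
introduces the terms $\det\nabla u^{(i,j)}$ and $\det\nabla(u^{(i,j)}+\varphi^{(i,j)})$, which are \emph{quadratic} in $\nabla u$. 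The hypothesis $|\nabla u|\log(2+|\nabla u|)\in L^{1}$ is strictly weaker than $\nabla u\in L^{2}$, so these two determinants need not even be integrable, let alone lie in $\mathcal{H}^{1}$. The Coifman--Lions--Meyer--Semmes theorem that you cite requires the div-free and curl-free ingredients to lie in conjugate Lebesgue spaces $L^{p}\times L^{p'}$ with $1<p<\infty$; it is not triggered by ``gradients in $L\log L$.'' So the three-term split manufactures divergent quantities whose cancellation you cannot justify, and the appeal to CLMS does not close the gap.

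The paper avoids quadratic terms entirely by exploiting that $\cof$ is \emph{linear} on $2\times 2$ matrices. Thus
\[
T_{ij}(\nabla u)=\int_{B}\ln R\;\cof\nabla u^{(i,j)}\cdot\nabla\varphi^{(i,j)}\,dx
\]
is a linear functional of $\nabla u$, to be paired with a fixed BMO-type object (built from $\ln R$ and the smooth compactly supported $\nabla\varphi$). The role of the $L\log L$ hypothesis is then different from what you propose: after extending $u$ by zero, Stein's lemma (maximal function characterisation) places $\nabla u$ itself -- not its Jacobian -- in $\mathcal{H}^{1}(\mathbb{R}^{2})$, and Fefferman--Stein duality gives continuity of $T_{ij}$ on $\mathcal{H}^{1}$. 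Mollification is then justified because $\nabla u_{\varepsilon}\to\nabla u$ in the Hardy norm, checked via the Riesz-transform characterisation. The right-hand side is handled more elementarily, by subtracting the angular average of $\varphi$ so that $\tilde\varphi/R$ is bounded. In short: the approximation must be run at the \emph{linear} level in $\nabla u$; your bilinear (Jacobian) reformulation destroys exactly the structure the hypothesis was designed to exploit.
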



\begin{proof} \textbf{(a)} We begin by establishing that it is sufficient to prove \eqref{gudegast} for smooth functions $u$.   Extending $u$ by zero outside $B$, and calling the resulting function $u$, we may suppose in particular that $\nabla u$ has compact support in $\mathbb{R}^{2}$.    By Stein's Lemma \cite[Section 5.2, p. 23]{St}, the assumption
$|\nabla u|\ln(2+|\nabla u|)) \in L^{1}(B)$ then implies that the maximal functon $M(|\nabla u|)$ belongs to $L^{1}(\mathbb{R}^{2})$, and hence that $\nabla u$ lies in $\sch^{1}(\mathbb{R}^{2})$.  This enables us to approximate $\nabla u$ using smooth gradients as follows.   Firstly, by \cite[Section 3, Corollary 1 to Theorem 6]{St}, $\sch^{1}(\mathbb{R}^{2})$ is naturally isomorphic with the Banach space $X$, where
\[  X = \{ v \in L^{1}(\mathbb{R}^{2}): \ R_{j}v \in L^{1}(\mathbb{R}^{2}), \  j = 1,2\},\]
and where $R_{j}v$ denotes the $j^{th}$ Riesz transform of $v$.  Furthermore, $X$ can be normed by 
\[ ||v||_{X} : = ||v||_{1} + ||R_{1}v||_{1}+ ||R_{2}v||_{1}.\]
Let $\rho_{\eps}$ be a standard mollifier sequence, and define $u_{\eps} = \rho_{\eps} \ast u$.   
By taking Fourier transforms, it is straightforward to prove that $R_{j}\nabla u_{\eps} =
\rho_{\eps} \ast R_{j}\nabla u$ for $j=1,2$, so that in particular by standard properties of mollified $L^{1}$ functions, $||R_{j}\nabla u_{\eps}||_{1} \to ||R_{j}\nabla u||_{1}$ as $\eps \to 0$ for $j=1,2$.   It is now evident that $\nabla u_{\eps}$ converges in the norm of $X$ to $\nabla u$, so that $\nabla u_{\eps}$ converges strongly to $\nabla u$ in $\sch^{1}(\mathbb{R}^{2})$.  

A short calculation shows that 
\[DH_{ij}(\nabla u)\,\cdot \nabla \varphi  = \cof \nabla u^{(i,j)} \cdot \nabla \varphi^{(i,j)},\]
 where $u^{(i,j)}=(u_{i},u_{j})^{T}$ for $i < j$.   Observing that $\ln R$ is a BMO function, and by appealing to the well-known Fefferman-Stein duality $(\mathcal{H}^{1})^{\ast} = BMO$, \cite[Theorem 2, p 145]{FS}, it follows that the linear functional $T_{ij}$ defined by 
 \[ T_{ij}(\nabla u) = \int_{B} \ln R \, \cof \nabla u^{(i,j)} \cdot \nabla \varphi^{(i,j)}\,dx\]
is continuous on $\sch^{1}(\mathbb{R}^{2})$ for each fixed $\varphi$.  Here we have implicitly used the fact that $\cof$ is linear on the $2 \times 2$ minors of $\nabla u \in \mathbb{R}^{m \times 2}$.   In particular, since $\nabla u_{\eps} \to \nabla u$ strongly in $\sch^{1}(\mathbb{R}^{2})$, the convergence
\[\int_{B} \ln R \, \cof \nabla u_{\eps}^{(i,j)} \cdot \nabla \varphi^{(i,j)} \,dx \to \int_{B} \ln R \, \cof \nabla u^{(i,j)} \cdot \nabla \varphi^{(i,j)} \,dx\]
as $\eps \to 0$ is immediate.    

Now consider the right-hand side of \eqref{gudegast}.   Let 
\[\tilde{\varphi}(R,\theta) = \varphi(R,\theta) - \frac{1}{2 \pi} \int_{0}^{2\pi}\varphi(R,\alpha)\,d\alpha \]
 and note that provided $u$ is $2\pi-$periodic in  $\theta$ we have
\begin{equation}\label{heavycruiser}\int_{B}\left(u_{i,_{\tau}}\varphi_{j} - u_{j,_{\theta}}\varphi_{i}\right)\,\frac{dx}{R}= \int_{B}\left(u_{i,_{\tau}}\tilde{\varphi}_{j} - u_{j,_{\tau}}\tilde{\varphi_{i}}\right)\,\frac{dx}{R}.
\end{equation}
Since $\varphi$ is smooth, it follows in particular that $\frac{\tilde{\varphi}}{R}$ has a removable singularity at the origin, and is otherwise bounded.  Writing the right-hand side of \eqref{heavycruiser} as
\[ \int_{B} \frac{\tilde{\varphi}^{(i,j)}}{R}\cdot J\nabla u^{(i,j)}\er \,dx, \]
it is therefore clear that we can pass to the limit $\eps \to 0$ in
\[ \int_{B} \frac{\tilde{\varphi}^{(i,j)}}{R}\cdot J\nabla u_{\eps}^{(i,j)}\er \,dx, \]
and replace $\tilde{\varphi}$ with $\varphi$.   In summary, it is sufficient to prove \eqref{gudegast} for smooth functions $u$.

To that end, in the following we integrate by parts and then use the fact that $\cof \nabla u$ is divergence free whenever $u$ is a smooth planar map.  
\begin{eqnarray*} \int_{B} \ln R \, DH_{ij}(\nabla u)\,\cdot \nabla \varphi \,dx & = & \int_{B} 
\ln R \, \cof \nabla u^{(i,j)} \cdot \nabla \varphi^{(i,j)}\,dx \\
& = - & \int_{B} \varphi^{(i,j)} \cdot \cof \nabla u^{(i,j)} \frac{\er}{R}\, dx \\
& = &  \int_{B} \varphi^{(i,j)} \cdot  Ju^{(i,j)}_{\tau}\,\frac{dx}{R} \\
& = & \int_{B}\left(u_{i,_{\tau}}\varphi_{j} - u_{j,_{\tau}}\varphi_{i}\right)\,\frac{dx}{R},
\end{eqnarray*}
so proving \eqref{gudegast}.

\vspace{1mm}
\textbf{(b)}  The Euler-Lagrange equation \eqref{elwk} may be written
\[\int_{B}  D\gamma(\nabla u)\cdot \nabla\varphi + \sum_{1\leq i < j \leq m} \lambda_{ij} \ln R \ DH_{ij}(\nabla u)  \cdot \nabla \varphi \, dx = 0.\]
Applying \eqref{gudegast} it follows that
\[\int_{B} D\gamma(\nabla u) \cdot \nabla\varphi\,dx + \sum_{1 \leq i < j \leq m}\int_{B}\lambda_{ij}\left(u_{i,_{\tau}}\varphi_{j} - u_{j,_{\tau}\varphi_{i}}\right)\,\frac{dx}{R} = 0.\]
Fix an index $k$ in $\{1,\ldots,m\}$ and note that the terms involving $\varphi_{k}$ in the second of these integrals are
\[ - \int_{B} \sum_{k < j \leq m}\lambda_{kj} u_{j,_{\tau}} \varphi_{k} \,\frac{dx}{R} + 
\int_{B} \sum_{1 \leq i < k}\lambda_{ik} u_{i,_{\tau}}\varphi_{k} \,\frac{dx}{R},\]
which, in terms of the matrix $\Lambda$ defined in \eqref{defLambda}, equals 
$\int_{B} -\Lambda u_{,_{\tau}}\cdot \frac{\varphi}{R}\,dx$, as required.
\end{proof}



\begin{prop}\label{Hechler}
Let the $m \times m$ matrix $\Lambda$ defined by \eqref{defLambda} in the statement of Lemma \ref{froebe} be non-zero.    Then 
\begin{itemize}\item[(a)]there exist one-homogeneous solutions $u(R,\theta)=Rg(\theta)$ to
\begin{equation}\label{kabalev} \int_{B} D\gamma(\nabla u) \cdot \nabla \varphi -  \Lambda u,_{\tau} \cdot \frac{\varphi}{R} \,dx = 0,\end{equation}
for all $\varphi \in C_{c}^{\infty}(B, \mathbb{R}^{m})$ such that
$g$ obeys the conservation law \begin{equation}\label{c1} |g|^{2}+|g'|^{2} = c^2
\end{equation}
for some constant $c$, and such that
 \begin{itemize}
\item[(i)] $u$ is linear if $m$ is an odd integer, or if $m$ is even and $\Lambda$ has a non-trivial kernel;
 
\item[(ii)] $u$ is non-linear and the set $u(B)$ is homeomorphic to a two dimensional disk in $\mathbb{R}^{m}$ covered $k$ times, $k \in \mathbb{N} \setminus \{1\}$, provided $f$ is such that 
\[ \left(\frac{(k^2-1)f'(t)}{kt}\right)^{2}\]  
lies in the (non-zero) spectrum of $-\Lambda^{2}$ for some $t > 0$.  
\end{itemize}

\item[(b)] Let $f$ belong to $C^{2}(\mathbb{R}^{+},\mathbb{R})$ and suppose that $f''(t) >0$ for all $t > 0$, $f'(0+) \geq 0$.    Then any one-homogeneous solution $u=Rg$ to \eqref{kabalev} such that $g$ is of class $C^{2}(\mathbb{S}^{1},\mathbb{R}^{m})$ satisfies the conservation law \eqref{c1} for some constant $c$.  In particular, $g$ satisfies
\[\frac{f'(c)}{c}(g''+g) + \Lambda g' = 0\]
on $[0,2 \pi]$.
\end{itemize}
\end{prop}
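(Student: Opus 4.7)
The plan is to show that both parts of the proposition follow from a single intermediate calculation: for any $C^2$ ansatz $u(R,\theta) = Rg(\theta)$, integration by parts in \eqref{kabalev} combined with the polar form of the Laplacian yields the pointwise ODE
\[
\phi(c)\,(g + g'') + \phi'(c)\,c'\,g' + \Lambda g' = 0,
\]
where $\phi(t) := f'(t)/t$ and $c(\theta)^2 := |g(\theta)|^2 + |g'(\theta)|^2$. The middle term records the $\theta$-variation of the coefficient $\phi(|\nabla u|)$; I would obtain it by differentiating $\phi(c(\theta))$ and using $\nabla\theta = e_\theta/R$ together with $u_{,\tau} = g'$.

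For part (b), I would take the inner product of this ODE with $g'$. Antisymmetry of $\Lambda$ kills $\Lambda g' \cdot g'$, and $g \cdot g' + g'' \cdot g' = c\,c'$, so the equation collapses to
\[
c'\bigl(\phi(c)\,c + \phi'(c)\,|g'|^2\bigr) = 0.
\]
A short manipulation using $\phi(c)c = f'(c)$ and $\phi'(c)c^2 = cf''(c) - f'(c)$ rewrites the bracket as $c^{-2}\bigl(f'(c)\,|g|^2 + cf''(c)\,|g'|^2\bigr)$, which is strictly positive whenever $c > 0$, thanks to $f'' > 0$ and $f'(c) \geq f'(0+) \geq 0$. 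Hence $c' \equiv 0$ on $\{c > 0\}$, and continuity of $c$ on $[0,2\pi]$ forces either $g \equiv 0$ or $c$ to be a positive constant throughout. With $c' \equiv 0$, the ODE reduces to the stated second-order equation.

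For part (a), the conservation law \eqref{c1} makes $\phi(c)$ constant, so the ODE simplifies to $\phi(c)(g+g'') + \Lambda g' = 0$. In case (i), a real antisymmetric matrix on an odd-dimensional space is automatically singular ($\det\Lambda = (-1)^m\det\Lambda$), so under either of the listed subcases $\ker\Lambda$ is non-trivial; choosing $v,w \in \ker\Lambda$ and setting $g(\theta) = v\cos\theta + w\sin\theta$ produces $g+g''=0$, $\Lambda g' = 0$, constant $|g|^2+|g'|^2 = |v|^2+|w|^2$, and linear $u(x) = v\,x_1 + w\,x_2$. For case (ii), I would decompose $\mathbb{R}^m$ into the $2$-dimensional real invariant subspaces of $\Lambda$, and on one whose block carries eigenvalues $\pm i\mu$ pick an orthonormal basis $\{e_1,e_2\}$ with $\Lambda e_1 = \mu e_2$, $\Lambda e_2 = -\mu e_1$. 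The ansatz $g(\theta) = \rho\bigl(\cos(k\theta)e_1 + \sin(k\theta)e_2\bigr)$ then gives $g+g'' = (1-k^2)g$ and $\Lambda g' = -k\mu g$, turning the ODE into a scalar compatibility which, upon squaring, reads $\bigl((k^2-1)f'(c)/(kc)\bigr)^2 = \mu^2$. Since $\mu^2$ lies in the non-zero spectrum of $-\Lambda^2$, this is precisely the stated hypothesis, and it fixes a valid $c$; the conservation law then pins down $\rho$ via $(1+k^2)\rho^2 = c^2$. The image $u(B)$ is manifestly a $k$-fold cover of a flat disk in the plane $\mathrm{span}\{e_1,e_2\}$.

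The main obstacle is the conservation-law step in (b): one must rule out $c$ being only locally constant, since a priori $\{c > 0\}$ could be a proper open subset of $[0,2\pi]$. Continuity of $c$ together with strict positivity of the bracket on $\{c > 0\}$ closes this gap. The rest of the argument is routine algebraic verification once the reduced ODE is in hand.
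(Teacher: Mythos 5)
Your proposal is correct and takes essentially the same route as the paper: the same reduction of \eqref{kabalev} to the pointwise ODE $(zg')'+zg+\Lambda g'=0$ with $z=f'(\sqrt{P})/\sqrt{P}$, the same device of taking the inner product with $g'$ and using the skewness of $\Lambda$ together with a positivity-plus-continuity argument to obtain the conservation law in (b), and the same ansatz $g=x\cos k\theta+y\sin k\theta$ reduced to an eigenvalue problem for $-\Lambda^{2}$ in (a). The only cosmetic differences are that you establish positivity of the bracket directly (rather than by the paper's contradiction via $f''\leq 0$) and you phrase case (a)(ii) through the real canonical form of $\Lambda$ rather than through the equation $\Lambda^{2}x=-\rho^{2}x$; these are equivalent.
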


\begin{proof} \textbf{(a)}  Let $g(\theta) = x \cos(k \theta) + y \sin(k \theta)$ for fixed vectors $x,y \in \mathbb{R}^{m}$ and non-zero integer $k$ to be determined as follows.  Note first that \eqref{c1} holds with $c^2 = (1+k^2)|x|^{2}$ provded $|x|^2 = |y|^2$ or $k^2=1$.   In either case,  \eqref{c1} implies $|\nabla u| =c$, so that \eqref{kabalev} becomes
\[ \int_{B} \left(\frac{f'(c)}{c}(g''+g) + \Lambda g'\right)\cdot \frac{\varphi}{R} \,dx = 0\]
for all $\varphi \in C_{c}^{\infty}(B,\mathbb{R}^{m})$.
Therefore in order to solve \eqref{kabalev}, and hence \eqref{elwk}, it is sufficient to ensure that $g$ satisfies 
\begin{equation}\label{s1}\frac{f'(c)}{c}(g''+g) + \Lambda g' = 0.\end{equation} 
We consider two cases, the first of which corresponds to finding a linear solution of \eqref{kabalev}.   

\vspace{1mm}

\noindent \textbf{Case (a)(i)} If $k^2=1$ then clearly $g''+g=0$, and \eqref{s1} holds only if $\Lambda x$ and $\Lambda y$ both vanish.   If $m$ is odd then the skew-symmetry of $\Lambda$ guarantees the existence of $x$ (and hence of $y$ by taking $y=x$), while if $m$ is even solutions corresponding to $k^2=1$ exist only if $\ker \Lambda$ is non-zero. 

\vspace{1mm}
\noindent
\textbf{Case (a)(ii)} When $k \neq 1$ equation \eqref{s1} holds only if 
\begin{eqnarray}\label{k} \rho x +  \Lambda y & = & 0 \\
 \nonumber \rho y -  \Lambda x & = & 0,
\end{eqnarray}
where $\rho:=\frac{f'(c)(1-k^2)}{kc}$.   Necessarily, $\Lambda^{2} x = -\rho^{2} x$.  Note that if this can be solved for $\rho \neq 0$ then defining $y$ by the second of the above equations, namely $y = \frac{1}{\rho} \Lambda x$,  yields a solution to the first.   Moreover, $y$ so defined automatically satisfies $|y| = |x|$, which is needed in order that \eqref{c1} holds.  Therefore it is sufficient to find $x$ such that $\Lambda x = -\rho^{2}x$ for some non-zero $\rho$.    To this end, observe that the matrix $-\Lambda^{2}$ is positive semi-definite and symmetric, and so has a diagonal representation $\textrm{Diag}({\rho_{1}}^2, \ldots, {\rho_{m}}^2)$ in terms of an appropriate basis, with $\rho_{j}$ real for $1 \leq j \leq m$.  Therefore there exists a non-zero eigenvalue for $\Lambda^{2}$ provided  $\Lambda^{2} \neq 0$.   But $\Lambda^2=0$ only if all its diagonal entries vanish, and since each such entry is the Euclidean norm of a row (equivalently column) of $\Lambda$ it must be that $\Lambda^2 = 0$ is 
possible only when $\Lambda = 0$, contradicting our hypothesis.    Finally, we suppose that the integer $k$ is such that the condition on $f$ stated in (a)(ii) above holds, so that there is a non-zero eigenvalue ${\rho_{0}}^{2}$  of $\Lambda^{2}$ such that
\begin{equation}\label{whist} \frac{(k^2-1)f'(t)}{kt}= \rho_{0}\end{equation}
for some $t > 0$.  Choose $x$ so that $|x| = t(k^2+1)^{-\frac{1}{2}}$.  Recalling that $c^{2} = (1+k^2)|x|^2$ we see that this choice implies $t=c$, and hence from \eqref{whist}
\[ \frac{(k^2-1)f'(c)}{kc}= \rho_{0}.\]
Fromt this it follows that equations \eqref{k} hold with $\rho = \rho_{0}$ and $x,y$ as described above.

\vspace{1mm}
\noindent \textbf{(b)} Let $P(\theta) = |g|^{2}+|g'|^2$ and suppose that \eqref{kabalev} holds.   Let 
\[ z = \frac{f'(\sqrt{P})}{\sqrt{P}} \]
when $P > 0$, and  set $z = 0$ when $P=0$.  The assumptions on $f$ made in statement (b) above are then such that $z$ vanishes if and only if $P$ vanishes.  

Integrating by parts in \eqref{kabalev} we obtain
\[\int_{B} ( (zg')'+zg+\Lambda g') \cdot \varphi \,dR\,d\theta = 0\]
for all $\varphi \in C_{c}^{\infty}(B, \mathbb{R}^m)$, and 
the regularity assumptions on $g$ then imply that 
\begin{equation}\label{humour} (zg')'+zg +\Lambda g' = 0 \end{equation}
at all $\theta$ such that $P(\theta) > 0$.     Now let $N=\{\theta \in [0,2\pi]: \ P(\theta) > 0\}$.   We shall show in the following that $N$ is either empty, in which case \eqref{c1} holds trivially, or $N=[0,2 \pi]$.

First note that since $\Lambda$ is skew, it follows that $g' \cdot \Lambda g' = 0$, and hence from \eqref{humour} that
\[ z(g'' \cdot g' + g' \cdot g) + z'|g'|^2 = 0\]
for $\theta \in N$.   Furthermore,  $z(\theta) >  0$ for $\theta \in N$, so that the latter equation implies in particular that 
\begin{equation*} \frac{1}{2}zP' + z' |g'|^2 = 0 \end{equation*}
on $N$.   On using the definition of $z$ given above, we obtain 
\begin{equation}\label{delibes} \frac{1}{2}P' \left(z(\sqrt{P}) + \frac{\dot{z}(\sqrt{P}) |g'|^{2}}{\sqrt{P}} \right)  = 0 \end{equation}
on $N$.   

We claim that \eqref{delibes} implies that $P'=0$ on $N$.    Suppose for a contradiction that $P'(\theta) \neq 0$ for some $\theta \in N$.  Then, by \eqref{delibes},
\[ z(\sqrt{P}) +  \frac{\dot{z}(\sqrt{P}) |g'|^{2}}{\sqrt{P}} = 0,\]
which holds only if both terms are non-zero, and in particular when $\dot{z}(\sqrt{P}) <0$.    But then 
\begin{eqnarray*} z(\sqrt{P})  & = &  |\dot{z}(\sqrt{P})|\frac{|g'|^2}{\sqrt{P}} \\
& \leq & |\dot{z}(\sqrt{P})|\sqrt{P} \\
 & = & - f''(\sqrt{P}) + \frac{f'(\sqrt{P})}{\sqrt{P}}, \end{eqnarray*}
 which, when the definition of $z$ is recalled, gives $f''(\sqrt{P}) \leq 0$, contradicting the hypothesis on $f$.    It follows that $P'=0$ on the set $N$, and since $P$ is continuous it must 
be that if $N$ is non-empty then it covers all of $[0,2 \pi]$.    Clearly \eqref{c1} then holds, which concludes the proof of part (b) of the proposition.
\end{proof}

\section{Properties of and conditions for uniqueness of critical points }\label{uniqueII}

In this section we restrict attention to the planar case and consider the functionals
\[G(u) = \int_{B_{r}} \gamma(\nabla u) + \lambda \ln R \, \det \nabla u \,dx\]
where $B_{r} \subset \mathbb{R}^{2}$ and $u$ belongs to the class
\[ \sca_{p} = \{ u \in W^{1,p}(B_{r};\mathbb{R}^{2}): \  u = \bar{u} \ \textrm{on} \ \partial B_{r}, \  G(u) \in (\infty, + \infty)\}.\] 
In addition to the properties of $f$ assumed in previous sections of the paper, we suppose that $f$ obeys a polynomial growth condition of order $p$, i.e.,
\[ c_{1}|A|^{p} \leq f(|A|) \leq c_{2}(1+|A|^{p}) \ \forall \ A \in \mathbb{R}^{2}\]
for fixed positive constants $c_{1},c_{2}$.

We use Proposition \ref{Hechler} to demonstrate that such functionals possess one-homogeneous critical points, henceforth referred to as $\bar{u}$.    That $\bar{u}$ is itself a critical point of $G$ then implies various facts about other, suitably regular critical points of the functional $G$, should they exist, in the class $\sca_{p}$ for appropriate $p > 1$.   For further details see Propositions \ref{Bach4} and  \ref{bachprop3} below.   In particular, we give a geometric condition which when satisfied implies the uniqueness of the one-homogeneous critical point $\bar{u}$ of $G$ referred to above.   The strict monotonicity of the gradient of 
the integrand $\gamma$ plays an important role in the calculations and can be inferred from relatively mild assumptions on $f$: see Lemma \ref{bach0} for details



We begin by finding $\bar{u}$ under the assumptions that $f$ is $C^{2}$ on $\mathbb{R}^{+}$ and that $f''(t) > 0$ for all $t > 0$.     Let us assume that the angular part $g(\theta)$ of $\bar{u}$ is smooth enough to apply part (b) of Proposition \ref{Hechler}. Then $|\nabla \bar{u}|^{2} = c^{2}$ for some constant $c$, and 
\begin{equation}\label{lam} \Lambda  =  \left (\begin{array}{l l} 0 & \lambda \\ -\lambda & 0 \end{array}\right).\end{equation}
According to part (a)(i) of Proposition \ref{Hechler}, linear solutions exist only if $\Lambda$ has a non-trivial kernel, which it plainly does not.   Hence we suppose there is an integer $k > 1$ and $t > 0$ such that 
\[ \left(\frac{(k^2-1)f'(t)}{kt}\right)^2  = \lambda^2 \]
as stated in part (a)(ii) of the Proposition.  In these circumstances, 
\[g(\theta) = x \cos k \theta + y \sin k \theta\]
where $-\Lambda^2 x = \lambda^2 x$ and $y = \frac{1}{\lambda} \Lambda x$.   But $-\Lambda^2 = \lambda^{2} \1$, so that the choice of $\frac{x}{|x|}$ is free.   We may therefore let $x=|x|e_{1}$ and $y=|x|e_{2}$, where 
\[ |x| = t (1+k^2)^{-\frac{1}{2}},\]
and hence
\[ g(\theta) =  t (1+k^2)^{-\frac{1}{2}} e(k\theta)\] 
for all $\theta$.  The resulting one-homogeneous map is thus proportional to the k-covering map, as is summarised below:

\begin{prop}\label{freewill}Let $f$ be $C^{2}$ on $\mathbb{R}^{+}$ and such that $f''(t) > 0$ for all $t > 0$.  Suppose that either $f$ or $\lambda$ is chosen so that 
\[ \frac{(k^2-1)f'(t)}{kt} = \lambda\]
holds for some $t > 0$. Then the k-covering map
\[ \bar{u}(R,\theta) = t(1+k^2)^{-\frac{1}{2}} R e(k \theta) \] 
 is a stationary point of the functional
 \[ G(u) = \int_{B_{r}} \gamma(\nabla u) + \lambda \ln R \, \det \nabla u \,dx \]
for each $r > 0$.
\end{prop}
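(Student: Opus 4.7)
The plan is to read the proposition as a direct specialisation of Lemma \ref{froebe}(b) together with Proposition \ref{Hechler}(a)(ii) to the planar case $m=2$, and then to verify the explicit formula for $\bar{u}$. In particular, nearly all of the analytic work has already been done in those two results, so the proof should really just stitch them together with a short computation.

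First, by Lemma \ref{froebe}(b) the weak Euler--Lagrange equation \eqref{elwk} for $G$ is equivalent to \eqref{kabalev} with $\Lambda$ as in \eqref{lam}. Since that $\Lambda$ plainly has trivial kernel, case (a)(i) of Proposition \ref{Hechler} yields only trivial (linear) solutions, so I would turn to case (a)(ii). The key algebraic simplification is $-\Lambda^2 = \lambda^2\mathbf{1}$, so every unit vector in $\mathbb{R}^2$ is an eigenvector of $-\Lambda^2$ corresponding to the same non-zero eigenvalue $\rho_0^2 = \lambda^2$. The spectral condition of Proposition \ref{Hechler}(a)(ii) therefore collapses exactly to the displayed hypothesis $\frac{(k^2-1)f'(t)}{kt} = \lambda$ of the present proposition.

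Next I would execute the construction from the proof of Proposition \ref{Hechler}(a)(ii), specialised in the obvious way: the freedom in choosing the eigenvector permits $x = |x| e_1$ with $|x| = t(1+k^2)^{-1/2}$, and then $y = \rho_0^{-1}\Lambda x$ works out to $\pm |x| e_2$. Up to the harmless relabelling $\theta\mapsto -\theta$, this produces the angular profile $g(\theta) = |x|(\cos k\theta\, e_1 + \sin k\theta\, e_2) = t(1+k^2)^{-1/2}\,e(k\theta)$, so that $\bar{u}(R,\theta) = Rg(\theta)$ is precisely the map in the statement. The chosen norm of $x$ also forces $c^2 = (1+k^2)|x|^2 = t^2$ in the conservation law \eqref{c1}, in exact agreement with the spectral condition.

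Finally one checks the mild regularity needed to apply the quoted results: since $g \in C^\infty(\mathbb{S}^1,\mathbb{R}^2)$, the map $\bar{u}$ is globally Lipschitz with $|\nabla \bar{u}|\equiv t$, whence $|\nabla \bar{u}|\ln(2+|\nabla \bar{u}|) \in L^1(B_r)$ and $\ln R\,\nabla \bar{u}$, $D\gamma(\nabla \bar{u})$ both belong to $L^1(B_r)$ trivially. The only point of care is the sign bookkeeping on $y$ indicated above, which is the ``main obstacle'' only in the very weak sense that one must be careful to orient the frame correctly; there is no genuine analytic difficulty.
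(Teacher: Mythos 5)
Your proposal is correct and follows essentially the same route as the paper: reduce via Lemma \ref{froebe}(b) to \eqref{kabalev}, observe that the planar $\Lambda$ satisfies $-\Lambda^{2}=\lambda^{2}\1$ so every direction is an eigenvector, and then run the construction of Proposition \ref{Hechler}(a)(ii) with $|x|=t(1+k^{2})^{-1/2}$, which forces $c=t$ and recovers the $k$-covering map. One small caution about your sign remark: the relabelling $\theta\mapsto-\theta$ is not in fact harmless for equation \eqref{s1}, since it sends $\Lambda g'$ to $-\Lambda g'$ and hence amounts to replacing $\lambda$ by $-\lambda$; what actually pins the sign down is that the correct formula is $y=\frac{1}{\rho}\Lambda x$ with $\rho=\frac{(1-k^{2})f'(c)}{kc}=-\lambda$ (given the hypothesis with $t=c$), which yields $y=+|x|e_{2}$ directly. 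This resolves the apparent $\pm$ ambiguity without any relabelling, and in fact exposes a harmless pair of cancelling sign slips in the paper's own paragraph, where $y=\frac{1}{\lambda}\Lambda x$ is written but $y=|x|e_{2}$ is correctly asserted.
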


Henceforth we let $\bar{u}(R,\theta) = aR\er(k\theta)$ be the one-homogeneous stationary point of $G$ found in Proposition \ref{freewill} above, where the constant $a = t(1+k^2)^{-\frac{1}{2}}$.  

\begin{lemma}\label{bach0} Let $f$ be $C^{2}$ on $\mathbb{R}^{+}$ and such that $f''(t) > 0$ for all $t > 0$.  Let $\gamma(F) = f(|F|)$  be such that $D\gamma(0)$ exists and $\gamma(F) \geq \gamma(0) = 0$ for all $F \in \mathbb{R}^{2 \times 2}$.
Then 
\begin{itemize}
\item[(i)]$\gamma$ is strongly convex, $D\gamma$ is strictly monotone and continuous, and 
\item[(ii)] if, in addition, $\gamma(F) \geq \nu |F|^{2}$ for some constant $\nu > 0$ and all $F$ then the functional 
\[ W(x,F) = \gamma(F) + \lambda \ln R \, \det F\]
is such that 
\[A(x,F):=D_{\scriptscriptstyle{F}}W(x,F)\]
satisfies hypotheses (H1), (H2) but not (H3) of Theorem \ref{Be11a}.  System 
\eqref{frog1} is solved by the one-homogeneous function $\bar{u}$ defined in Proposition \ref{freewill} above.
\end{itemize} 
\end{lemma}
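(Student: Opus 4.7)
For part (i), the plan is to first exploit the minimality of $F=0$: since $\gamma(F)\geq\gamma(0)=0$ and $D\gamma(0)$ exists, $D\gamma(0)=0$; substituting into the formula $D\gamma(F)=f'(|F|)F/|F|$ and sending $F\to 0$ along a ray forces $f'(0^+)=0$. Combined with $f''>0$ on $(0,\infty)$, this yields $f'>0$ on $(0,\infty)$, so $f$ is strictly increasing and strictly convex. To lift strict convexity from $f$ to $\gamma$, I would consider the segment $s\mapsto (1-s)F+sG$, $s\in(0,1)$: if $|F|\neq|G|$, strict convexity of $f$ applied to the convex function $s\mapsto|(1-s)F+sG|$ gives strictness; if $|F|=|G|$ but $F\neq G$, the triangle inequality $|(1-s)F+sG|<|F|$ is strict, and since $f$ is strictly increasing the conclusion follows. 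Continuity of $D\gamma$ on $\RR^{2\times 2}\setminus\{0\}$ is inherited from $f\in C^2$, and continuity at $0$ follows from $f'(0^+)=0$ together with the bound $|D\gamma(F)|\leq f'(|F|)$. Finally, strict monotonicity of $D\gamma$ is the standard consequence of $C^1$ strict convexity.

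For part (ii), the identity $D_F\det F=\cof F$ (linear in $F$ on $\RR^{2\times 2}$) gives $A(x,F)=D\gamma(F)+\lambda\ln R\,\cof F$. Hypothesis (H2) is then immediate from
\[|x|\partial_{x_i}A(x,F)=\lambda\,\frac{x_i}{R}\,\cof F,\]
which is continuous on $(B\setminus\{0\})\times\RR^{2\times 2}$. For (H1), I would use that $\det$ is a null Lagrangian on $\RR^{2\times 2}$ --- that is, $\det(F+ta\otimes b)$ is affine in $t$ --- so that the contribution of $\lambda\ln R\,\det F$ to $D^2_F W[a\otimes b,a\otimes b]$ vanishes, reducing rank-one ellipticity of $A$ to that of $D^2\gamma$. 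Direct differentiation yields
\[D^2\gamma(F)[a\otimes b,a\otimes b]=\Big(f''(|F|)-\frac{f'(|F|)}{|F|}\Big)\frac{(a\cdot Fb)^2}{|F|^2}+\frac{f'(|F|)}{|F|}|a|^2|b|^2,\]
which via Cauchy--Schwarz ($|a\cdot Fb|\leq |F||a||b|$) is bounded below by $\min\{f''(|F|),f'(|F|)/|F|\}\,|a|^2|b|^2$. The quadratic lower bound $\gamma(F)\geq\nu|F|^2$, together with $f(0)=f'(0^+)=0$, then provides the uniform positive floor needed for $\min\{f''(t),f'(t)/t\}$.

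For the violation of (H3), since $\bar u(R,\theta)=aR\er(k\theta)$ is one-homogeneous, $\nabla\bar u$, and hence $\cof\nabla\bar u$, depend only on $\theta$; a direct computation gives $\det\nabla\bar u=ka^2$ and $M(\theta):=\cof\nabla\bar u(R,\theta)$ is independent of $R$ and nowhere zero. Thus
\[|x|\partial_{x_i}A(x,\nabla\bar u(x))=\lambda\,\frac{x_i}{R}\,M(\theta),\]
which along any ray $\theta=\mathrm{const}$ is a nonzero constant for generic $\theta$, so in particular does not tend to zero as $|x|\to 0$. Hence (H3) fails. That $\bar u$ solves \eqref{frog1} is then a restatement of Proposition \ref{freewill}: $\bar u$ is a stationary point of $G$, and with $A=D_F W$ the stationarity condition is exactly \eqref{frog1}. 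The main technical obstacle is the uniform rank-one ellipticity step in (H1): $\gamma(F)\geq\nu|F|^2$ is slightly weaker than genuine strong convexity, and producing a global constant for $\min\{f'',f'/t\}$ requires careful use of the identity $f(t)=\int_0^t(t-r)f''(r)\,dr$ together with the pointwise positivity $f''>0$.
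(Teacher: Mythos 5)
Your route is essentially the paper's. In part (i) the paper proves strict convexity by computing $D^{2}\gamma(F)[\Pi,\Pi]=f''(|F|)(\Pi\cdot\hat F)^{2}+\frac{f'(|F|)}{|F|}\bigl(|\Pi|^{2}-(\Pi\cdot\hat F)^{2}\bigr)$ for $F\neq0$ and treating $F=0$ separately, whereas you argue by composition (strict convexity and strict monotonicity of $f$ plus the strict triangle inequality in the case $|F|=|G|$); both work, and yours is a touch more elementary. In part (ii) you use exactly the paper's mechanism: $\det(a\otimes b)=0$ removes the logarithmic term from the rank-one Hessian for (H1); $|x|\partial_{x_{i}}A(x,F)=\lambda\frac{x_{i}}{R}\,\cof F$ gives (H2) (you are in fact more precise than the paper, which suppresses the bounded factor $x_{i}/R$); (H3) fails because $\cof\nabla\bar u$ is a nonzero function of $\theta$ alone; and that $\bar u$ solves \eqref{frog1} is exactly Proposition \ref{freewill}, which is also all the paper invokes.

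The step you flag as the main obstacle is, however, a real one, and your proposed repair does not close it. Convexity and $f(0)=0$ do give $f'(t)\geq f(t)/t\geq\nu t$, so the tangential coefficient $f'(t)/t\geq\nu$ is automatic; but no manipulation of $f(t)=\int_{0}^{t}(t-r)f''(r)\,dr$ together with pointwise positivity of $f''$ can produce a uniform lower bound on $f''$, and taking $F$ parallel to $a\otimes b$ shows that the rank-one constant at $|F|=t$ is exactly $f''(t)$, so uniform (H1) is equivalent to $\inf_{t>0}f''(t)>0$. This does not follow from $\gamma\geq\nu|F|^{2}$: for instance $f(t)=\nu t^{2}+\varepsilon\sin^{2}(Mt)$ with $\varepsilon M^{2}<\nu$ satisfies every hypothesis of the lemma yet has $\inf f''=2\nu-2\varepsilon M^{2}$, which can be made smaller than the claimed constant $\nu$, and gluing deeper and deeper such oscillations along a sequence $t_{n}\to\infty$ gives $\inf f''=0$ while retaining $f\geq\nu t^{2}$ and $f''>0$. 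You should know that the paper is no more careful here: its proof simply asserts that the quadratic lower bound ``ensures'' $D^{2}\gamma(F)[\Pi,\Pi]\geq\nu|\Pi|^{2}$, so your sketch reproduces the paper's argument together with its weakest link. To make (H1) airtight one must either strengthen the hypothesis to a uniform Hessian bound (e.g. $f''\geq\nu_{0}>0$, which is presumably what ``strongly convex'' in (i) is meant to encode) or settle for a non-uniform ellipticity constant.
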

\begin{proof}  To prove (i), we begin by noting that the hypotheses together with  standard results from  convex analysis imply that $D\gamma(0)=0$, $f'(0)=0$ and that  $f$ is strictly convex on $(0,\infty)$.   In  particular, $f(t)>0$ and $f'(t)>0$ for $t >0$, and $f'$ is continuous on $[0,\infty)$. 
Now,  for any $2 \times 2$ matrix $\Pi$ and any non-zero $F$,
\[D^{2}\gamma(F)[\Pi,\Pi] = f''(|F|)(\Pi \cdot \hat{F})^{2} + \frac{f'(|F|)}{|F|}\left(|\Pi|^{2} -(\Pi \cdot \hat{F})^{2}\right),\]
where $\hat{F} = \frac{F}{|F|}$.   Thus $D^{2}\gamma(F) >0$ as a quadratic form,  and hence every non-zero $F$ is a point of strict convexity of $\gamma$, from which we deduce that
\[\gamma(F') - \gamma(F) > D\gamma(F)\cdot(F'-F) \ \forall \ F,F' \in  \mathbb{R}^{2 \times 2}. \]  
(This is standard: see e.g. \cite[Theorem 4.5]{Rockafellar} or \cite[Theorem 2.5.2]{Da08}.)
It remains to show that the same strict inequality holds for all $F'$ when $F=0$, which amounts to showing $\gamma(F') > 0$.  But this follows easily from the properties  of $f$ deduced above.  Thus $D\gamma$ is continuous and strictly monotone, and part (i) is proved.    

The extra hypothesis supplied in part (ii) ensures that  
\[D^{2}\gamma(F)[\Pi,\Pi] \geq \nu |\Pi|^{2}\]
for all $F$.     To confirm hypotheses (H1) we must check that 
\[ D^{2}W(x,F)[a \otimes b, a \otimes b] \geq \nu|a \otimes b|^{2}\]
independently of $x$.   But 
\begin{eqnarray*} D^{2}W(x,F)[a \otimes b, a \otimes b] & = &  D^{2}\gamma(F)[a \otimes b, a \otimes b] + 2 \lambda \ln  R \, \det(a \otimes b) \\
& \geq & \nu |a \otimes b|^{2},\end{eqnarray*}
and so  (H1) holds.   (H2) holds because 
\[ R \partial_{x_{i}} D_{F}W(x,F) = \lambda \cof F\]
is continuous everywhere.   Hypothesis (H3), however is violated:  if it were to  hold we would require
\[ \lim_{R \to 0} R \partial_{x_{i}} D_{F}W(x,F) = 0.\]
But by the calculation  above  this is false whenever $F \neq 0$.

\end{proof}
The next result will be used in connection with Lemma \ref{bach2a}. 

\begin{lemma}\label{regina} Let $\bar{u}$ be as above and assume that $u$ is a critical point of $G$ that is $C^{1}$ in a semi-open annulus $\{ x \in B_{r}: \ r - \delta < |x| \leq r\}$ for some $\delta > 0$.   Then 
\[ \nabla u(x) = \nabla \bar{u}(x) \ \ x \in \partial B_{r}.\]
\end{lemma}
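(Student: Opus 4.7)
My plan is to argue in two steps: first reduce the claim to the agreement of the radial derivative of $u$ and $\bar{u}$ on $\partial B_{r}$, then extract that agreement from the weak Euler-Lagrange equation of Lemma \ref{froebe}(b) by testing with a cutoff version of $v := u - \bar{u}$ and appealing to the strict monotonicity supplied by Lemma \ref{bach0}.

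Since $u \in \sca_{p}$ means $u = \bar{u}$ on $\partial B_{r}$, and both maps are $C^{1}$ up to $\partial B_{r}$, differentiating this equality along the boundary gives $u_{,\tau}(r,\theta) = \bar{u}_{,\tau}(r,\theta)$ for every $\theta$, so it suffices to show $u_{,R}(r,\theta) = \bar{u}_{,R}(r,\theta)$. I would also upgrade the $C^{1}$ hypothesis via standard Schauder estimates applied to the elliptic system of Lemma \ref{froebe}(b) (whose coefficients $D^{2}\gamma(\nabla u)$ are continuous in $A$), obtaining that the strong form $\mathrm{div}\,D\gamma(\nabla u) + \Lambda u_{,\tau}/R = 0$ holds classically in the semi-open annulus $A = \{r - \delta < R \leq r\}$, and similarly for $\bar{u}$. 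Subtracting produces a linear elliptic system for $v$ in $A$ with zero Dirichlet data on $\partial B_{r}$.

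The main step is to test the subtracted weak Euler-Lagrange equation,
\[ \int_{B_{r}} [D\gamma(\nabla u) - D\gamma(\nabla \bar{u})] \cdot \nabla \varphi - \Lambda v_{,\tau} \cdot \frac{\varphi}{R}\,dx = 0, \]
against an admissible (Lipschitz) choice $\varphi = v\eta_{\delta}$, where $\eta_{\delta}$ is a radial cutoff equal to $1$ on a narrow band $\{r - \delta/2 \leq R \leq r\}$ and vanishing for $R \leq r - \delta$. Note that $v\eta_{\delta}$ lies in $W^{1,p}_{0}(B_{r})$ since $v$ vanishes on $\partial B_{r}$. Expanding $\nabla(v\eta_{\delta})$ splits the integral into three contributions: a non-negative monotonicity term $I_{1}(\delta) = \int \eta_{\delta}[D\gamma(\nabla u) - D\gamma(\nabla \bar{u})] \cdot \nabla v \,dx$, a cutoff-gradient term $I_{2}(\delta)$ supported where $\eta_{\delta}' \ne 0$, and a skew term $I_{3}(\delta)$ containing $\Lambda$. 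Using $v|_{\partial B_{r}} = 0$ and the consequent Lipschitz bound $|v(R,\theta)| \leq L(r-R)$ in $A$, both $|I_{2}(\delta)|$ and $|I_{3}(\delta)|$ are readily seen to be $O(\delta)$ and so vanish as $\delta \to 0$, forcing $I_{1}(\delta) \to 0$.

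The principal obstacle — and what I expect to be the heart of the technical work — is extracting pointwise boundary information from the vanishing of $I_{1}(\delta)$. Letting $\delta \to 0$ alone gives $I_{1} \to 0$ for trivial reasons (the domain shrinks), so one must rescale by $\delta$ and analyse $I_{1}(\delta)/\delta$ against $I_{2}(\delta)/\delta$; a direct expansion reveals leading-order cancellation between these two contributions, so the scaling has to be chosen carefully, or one instead combines the above with the strong form of the Euler-Lagrange equation evaluated on $\partial B_{r}$ (where $v_{,\tau}\equiv 0$) to isolate $v_{,R}(r,\theta)$. Once one reaches a relation of the form $\int_{\partial B_{r}} [D\gamma(\nabla u) - D\gamma(\nabla\bar{u})]e_{R}\cdot v_{,R}\,d\sch^{1} \leq 0$, the strict monotonicity in Lemma \ref{bach0}(i), which by expanding $D\gamma(\nabla u) - D\gamma(\nabla\bar{u}) = \int_{0}^{1}D^{2}\gamma(\nabla\bar{u} + s\,v_{,R}\otimes e_{R})[v_{,R}\otimes e_{R}]\,ds$ on $\partial B_{r}$ gives the reverse inequality bounded below by $\nu\int|v_{,R}|^{2}\,d\sch^{1}$, forces $v_{,R}(r,\theta)=0$ pointwise, and hence $\nabla u = \nabla \bar{u}$ on $\partial B_{r}$.
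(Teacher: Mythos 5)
Your plan follows the paper's own line: subtract the two weak Euler--Lagrange equations, test with $\varphi=\eta_{\eps}(u-\bar u)$ for a radial cutoff $\eta_{\eps}$ supported in a boundary layer of width $\eps$, divide by $\eps$ and let $\eps\to 0$ to land on the boundary relation \eqref{vivaldi4}, then invoke strict monotonicity. You then flag the central difficulty --- ``a direct expansion reveals leading-order cancellation between these two contributions'' --- and gesture at two escapes (choose the scaling more carefully; bootstrap to $C^{2}$ and use the strong form) without carrying either out. As written, your submission is a plan rather than a proof.

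That said, the cancellation you identify is a genuine concern, and it is precisely where the paper's own argument is at its most delicate. Writing $\Delta := D\gamma(\nabla u)-D\gamma(\nabla\bar u)$, $\psi := u-\bar u$, $q(\theta):=\psi_{,R}(r,\theta)$, the paper splits $\psi/\eps = -q(\theta) + l(\eps)/\eps$ and asserts $\eps^{-1}l(\eps)\to 0$. However $\psi(R,\theta)\approx -q(\theta)(r-R)$ on $B_r\setminus B_{r-\eps}$, where $(r-R)/\eps$ sweeps over $[0,1]$ rather than staying near $1$, so $\eps^{-1}l(\eps)$ is of order one, not $o(1)$. Tracking its contribution to the integral --- most transparently by freezing $\Delta$ at a constant matrix, in which case an integration by parts shows that the two terms $\eps^{-1}\int\eta\,\Delta\cdot\nabla\psi\,dx$ and $\eps^{-2}\int\Delta\cdot\psi\otimes\er\,dx$ cancel identically --- one finds that the $l(\eps)$ term in \eqref{vivaldi2} produces $+\frac{1}{2}\int_{\partial B_r}\Delta\cdot q\otimes\er\,ds$, annihilating the $-\frac{1}{2}\int_{\partial B_r}\Delta\cdot q\otimes\er\,ds$ coming from the first line; the $\eps$-rescaled limit is then $0=0$, not \eqref{vivaldi3}. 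So your instinct that ``the scaling has to be chosen carefully'' is right, but no corrected scaling is supplied. The alternative route through Schauder bootstrapping and the strong form would need $u\in C^{2}$ near $\partial B_r$ (more than the stated $C^{1}$) and a genuinely next-order expansion; this too is not carried out in your sketch, so the gap remains open.
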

\begin{proof} An approximation argument using the regularity assumption on $u$ implies that 
\[ \int_{B_{r}} D\gamma(\nabla u) \cdot \nabla\varphi + \lambda \ln R \, \cof \nabla u \cdot \nabla \varphi \,dx = 0 \]
holds in particular when $\varphi$ merely vanishes at $\partial B_{r}$.  Similarly, 
\[ \int_{B_{r}} D\gamma(\nabla \bar{u})\cdot \nabla\varphi + \lambda \ln R \, \cof \nabla \bar{u} \cdot \nabla \varphi \,dx = 0, \]
so that by subtracting the two and letting 
\[ \Delta = D\gamma(\nabla u) -  D\gamma(\nabla \bar{u})\]
the equation
\[ \int_{B_{r}} \Delta \cdot \nabla \varphi + \lambda \ln R \, \cof(\nabla u - \nabla \bar{u}) \cdot \nabla \varphi \,dx = 0\]
follows.  Using \eqref{kabalev} and \eqref{lam}, the term involving $\cof (\nabla u - \nabla \bar{u}) \cdot \nabla \varphi$ can be rewritten as  
\[\int_{B_{r}} \frac{\lambda J(\partial,_{\tau}(u - \bar{u})) \cdot \varphi}{R} \,dx,\] 
giving
\begin{equation}\label{vivaldi1} \int_{B_{r}} \Delta \cdot \nabla \varphi + \frac{\lambda J(\partial,_{\tau}(u - \bar{u})) \cdot \varphi}{R} \,dx = 0.\end{equation}
Define the function $\eta(t,\eps)$ by
\[ \eta(t,\eps) = \left\{\begin{array}{l l} 0  & \textrm{if} \ 0 \leq t \leq r - \eps \\
\frac{ t - (r  - \eps)}{\eps} & \textrm{if} \ r-\eps \leq t \leq r, \end{array} \right.\]
let $\psi = u - \bar{u}$  and take $\varphi = \eta \psi$ in \eqref{vivaldi1}.   The resulting expression involves in particular the term
\[\int_{B_{r}\setminus B_{r-\eps}} \Delta \cdot \frac{\psi \otimes \er}{\eps} \,dx = 
-\int_{B_{r}\setminus B_{r-\eps}} \Delta \cdot \left((u,_{\scriptscriptstyle{R}}(r,\theta) - \bar{u},_{\scriptscriptstyle{R}}(r, \theta)) \otimes \er  + \frac{l(\eps)}{\eps}\right), \]
where the term $\eps^{-1}l(\eps) \to 0$ as $\eps \to 0$.     Using this, \eqref{vivaldi1} reads
\begin{eqnarray}\label{vivaldi2} & & \int_{B_{r}\setminus B_{r - \eps}} \Delta \cdot \left( \eta (u_{,_{R}} - \bar{u}_{,_{R}}) \otimes \er - (u_{,_{R}}(r,\theta) - \bar{u}_{,_{R}}(r, \theta)) \otimes \er\right) \,dx  + {}\nonumber \\ & & {}+\int_{B_{r}\setminus B_{r - \eps}} \frac{\Delta \cdot l(\eps)}{\eps} \,dx + {}  \nonumber \\
& &  + {}  \int_{B_{r}\setminus B_{r - \eps}} \eta \Delta \cdot (u_{,_{\theta}} - \bar{u}_{,_{\theta}}) \otimes \frac{\et}{R} + \frac{\lambda J(\partial_{\tau}(u - \bar{u})) \cdot \varphi}{R} \,dx = 0. \end{eqnarray}
Since 
\[ \frac{1}{\eps} \int_{B_{r} \setminus B_{r - \eps}} \eta(x) \,dx \to \frac{r}{2} \ \textrm{as} \ \eps \to 0,\]
it is clear from the assumed regularity of $u$ that 
\[\frac{1}{\eps}\int_{B_{r} \setminus B_{r - \eps}} \eta \Delta \cdot (u_{,_{R}} - \bar{u}_{,_{R}}) \otimes \er \,dx \to \frac{1}{2} \int_{\partial B_{r}} \Delta \cdot (u,_{\scriptscriptstyle{R}}(r,\theta) - \bar{u},_{\scriptscriptstyle{R}}(r, \theta)) \otimes \er \,ds.\]
Similarly, 
\[\frac{1}{\eps}\int_{B_{r} \setminus B_{r - \eps}} \eta \Delta \cdot (u,_{\scriptscriptstyle{R}}(r,\theta) - \bar{u},_{\scriptscriptstyle{R}}(r,\theta)) \otimes \er \,dx \to  \int_{\partial B_{r}} \Delta \cdot (u,_{\scriptscriptstyle{R}}(r,\theta) - \bar{u},_{\scriptscriptstyle{R}}(r, \theta)) \otimes \er \,ds.\]
Dividing \eqref{vivaldi2} by $\eps$ and letting $\eps \to 0$, it follows that
\begin{equation}\label{vivaldi3}\int_{\partial B_{r}} \Delta \cdot (u,_{\scriptscriptstyle{R}}(r,\theta) - \bar{u},_{\scriptscriptstyle{R}}(r, \theta)) \otimes \er \,ds = 0.\end{equation}
Now, in view of $u=\bar{u}$ on $\partial B_{r}$,  $\nabla u - \nabla \bar{u} = (u,_{\scriptscriptstyle{R}}(r,\theta) - \bar{u},_{\scriptscriptstyle{R}}(r, \theta)) \otimes \er$ on $\partial B_{r}$, so that \eqref{vivaldi3} becomes 
\begin{equation}\label{vivaldi4}\int_{\partial B_{r}} (D\gamma(\nabla u) - D\gamma(\nabla \bar{u})) \cdot (\nabla u - \nabla \bar{u}) \, ds = 0.\end{equation}
Since $D\gamma$ is strictly monotone, it follows that  
\eqref{vivaldi4} holds only if $\nabla u = \nabla \bar{u}$ on $\partial B_{r}$, concluding the proof.
\end{proof}

A second and more immediate consequence of the strict monotonicity of $D\gamma$ is contained in the next result.  Note that it applies to any two critical points of $G$.

\begin{prop}\label{Bach4}Let $u_{1}$ and $u_{2}$ be critical points of $G$ in $\sca_{p}$.  Then 
\[ \int_{B_{r}} \ln R \, \det ( \nabla u_{1} - \nabla u_{2}) \,dx \leq 0 \]
with equality if and only if $u_{1}=u_{2}$ a.e. in $B_{r}$.\end{prop}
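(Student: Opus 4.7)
Since $u_1 = u_2 = \bar u$ on $\partial B_r$, the difference $\psi := u_1 - u_2$ lies in $W^{1,p}_0(B_r;\mathbb{R}^2)$.  The idea is to use $\psi$ as a test function in the weak Euler--Lagrange equation \eqref{elwk} satisfied by each $u_i$, subtract the two identities, and exploit the strict monotonicity of $D\gamma$ established in Lemma \ref{bach0}(i).

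\textbf{Main steps.}  First, I will justify that $\psi$ is an admissible test function by approximating it in $W^{1,p}_0$ by $\varphi_n \in C_c^{\infty}(B_r;\mathbb{R}^2)$ and passing to the limit in \eqref{elwk}.  For the $D\gamma$-term this uses the $p$-growth of $D\gamma$ (whence $D\gamma(\nabla u_i) \in L^{p'}$), while for the logarithmic term it uses that $\cof \nabla u_i \in L^p$ together with the fact that $\ln R \in L^q(B_r)$ for every $q < \infty$, so that $\ln R\,\cof \nabla u_i$ pairs with $\nabla \psi \in L^p$.  Testing \eqref{elwk} with $\psi$ for $u_1$ and $u_2$ in turn, and subtracting, yields
\begin{equation*}
\int_{B_r}\!\bigl(D\gamma(\nabla u_1)-D\gamma(\nabla u_2)\bigr)\cdot\nabla\psi\,dx
+\lambda\!\int_{B_r}\!\ln R\;\cof(\nabla u_1-\nabla u_2)\cdot\nabla\psi\,dx = 0,
\end{equation*}
where the linearity of $\cof$ on $2\times 2$ matrices has been used to combine the cofactor terms.

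Next, I invoke the pointwise identity $A\cdot\cof A = 2\det A$ for $2\times 2$ matrices (which follows directly from $\cof A = J^{T}AJ$ or by inspection).  Applied to $A = \nabla u_1 - \nabla u_2 = \nabla\psi$ this converts the above into
\begin{equation*}
\int_{B_r}\!\bigl(D\gamma(\nabla u_1)-D\gamma(\nabla u_2)\bigr)\cdot(\nabla u_1-\nabla u_2)\,dx
+2\lambda\!\int_{B_r}\!\ln R\,\det(\nabla u_1-\nabla u_2)\,dx = 0.
\end{equation*}
By Lemma \ref{bach0}(i) the first integrand is nonnegative pointwise, and is zero only where $\nabla u_1 = \nabla u_2$.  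Since $\lambda > 0$ (this is forced by the relation $(k^2-1)f'(t)/(kt) = \lambda$ in Proposition \ref{freewill}, with $k>1$, $f'>0$, $t>0$), rearranging gives $\int_{B_r}\ln R\,\det(\nabla u_1-\nabla u_2)\,dx \leq 0$ immediately.

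\textbf{Equality case and expected obstacle.}  If equality holds, the monotonicity integral must vanish, so strict monotonicity forces $\nabla u_1 = \nabla u_2$ a.e.\ in $B_r$; combined with $u_1 = u_2$ on $\partial B_r$, Poincaré's inequality yields $u_1 = u_2$ a.e.  The main technical point is the approximation argument in step one: one must verify that $\ln R\,\cof \nabla u_i$ lies in the dual of the space in which $\nabla\psi$ is approximated, so that the limit passage is legitimate.  This is routine given the $p$-growth of $f$ assumed in the setup, but is the only nontrivial estimate; everything thereafter is algebraic manipulation and an appeal to Lemma \ref{bach0}(i).
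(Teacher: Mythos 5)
Your proof follows essentially the same route as the paper's: subtract the two weak Euler--Lagrange equations, test with $w=u_{1}-u_{2}$, use the linearity of $\cof$ and the identity $\cof \nabla w\cdot\nabla w=2\det\nabla w$, and conclude from the strict monotonicity of $D\gamma$ (Lemma \ref{bach0}(i)) together with $\lambda>0$; the equality case is handled identically. The one genuine problem sits exactly at the step you dismiss as routine, namely the justification that $w$ may be used as a test function in the logarithmic term. To pass to the limit along $\varphi_{n}\to w$ in $W^{1,p}_{0}$ you need the functional $\varphi\mapsto\int_{B_{r}}\ln R\,\cof\nabla w\cdot\nabla\varphi\,dx$ to be continuous under $L^{p}$ convergence of gradients, i.e.\ you need $\ln R\,\cof\nabla w\in L^{p'}(B_{r})$. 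From $\cof\nabla w\in L^{p}$ and $\ln R\in L^{q}$ for every finite $q$ you only obtain $\ln R\,\cof\nabla w\in L^{q}$ for every $q<p$, so your H\"{o}lder pairing works precisely when $p'<p$, that is when $p>2$. The proposition, however, is stated for critical points in $\sca_{p}$ with the growth exponent $p$ of $f$ only assumed to satisfy $p>1$ (the restriction $p>2$ is introduced later, in Section \ref{con}, for different reasons); for $p\leq 2$ your argument breaks down, and for $p=2$ it is not even clear a priori that $\ln R\,\det\nabla w$ is integrable, since $\det\nabla w$ is merely $L^{1}$ and $\ln R$ is unbounded.

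The paper closes this gap with its central analytic device, which your proposal misses: the argument of Lemma \ref{froebe}(a), where $\ln R$ is regarded as a BMO function, the Jacobian-type quantity $\cof\nabla w\cdot\nabla\varphi$ is placed (after mollification of the gradient) in the Hardy space $\sch^{1}(\mathbb{R}^{2})$, and the limit passage is effected through the Fefferman--Stein duality $(\sch^{1})^{\ast}=BMO$; this only requires $|\nabla w|\ln(2+|\nabla w|)\in L^{1}$ and hence covers the whole range $p>1$. So either restrict your statement to $p>2$, in which case your simpler H\"{o}lder argument is adequate, or replace it by the $\sch^{1}$--BMO argument of Lemma \ref{froebe}(a), as the paper's proof explicitly does. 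The remaining ingredients of your proof (the $p$-growth argument for the $D\gamma$ term, the sign of $\lambda$, strict monotonicity, and Poincar\'{e} for the equality case) match the paper and are fine.
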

\begin{proof}By definition,  the critical points $u_{1}$ and $u_{2}$ satisfy
\[\int_{B_{r}} D\gamma(\nabla u_{j}) \cdot \nabla \varphi + \lambda \ln R \ \cof \nabla u_{j} \cdot \nabla \varphi \,dx = 0,\]
$j=1,2$, for all smooth test functions $\varphi$ with compact support in the ball $B_{r}$.    Subtracting the two equations and letting $w=u_{1}-u_{2}$ yields
\[ \int_{B_{r}} (D\gamma(\nabla u_{1}) - D\gamma(\nabla u_{2})) \cdot \nabla \varphi + \lambda \ln R \, \cof \nabla w \cdot \nabla \varphi \,dx = 0\]
for all such  $\varphi$.   By an approximation argument we may take $\varphi=w$ in the above.  To be specific, the first term can be approximated by noting that (in view of the assumed $p-$growth of $\gamma$) $D\gamma(\nabla) \in L^{p'}(B_{r})$ whenever $\nabla u \in L^{p}(B_{r})$.  Here, $p'$ is the H\"{o}lder conjugate of $p$.   The second term involving $\ln R$ requires the argument given in the proof of part (a) of Lemma \ref{froebe}.  The result is that
\[ \int_{B_{r}} (D\gamma(\nabla u_{1}) - D\gamma(\nabla u_{2})) \cdot \nabla w + 2 \lambda \ln R \, \det  \nabla w\,dx = 0.\]
The strict  monotonicity of $D\gamma$ together with the fact that $\lambda > 0$ implies
\[ \int_{B_{r}} \ln R \, \det \nabla w \,dx \leq 0\]
with  equality if and only if 
\[(D\gamma(\nabla u_{1}) - D\gamma(\nabla u_{2})) \cdot (\nabla u_{1} - \nabla u_{2}) = 0 \]
a.e. in $B_{r}$, i.e., if and only if $u_{1} = u_{2}$ a.e. in $B_{r}$.
\end{proof}

The following two results will be of use in connection with our discussion of criteria for uniqueness of critical points of $G$.  The first is essentially an identity; the second is a technical lemma whose function will become apparent in the course of the proof of Proposition \ref{bachprop3} below.

\begin{lemma}\label{bach2b} Let $u \in \sca_{p}$ and suppose $\bar{u} = a R e_{\scriptscriptstyle{R}}(k\theta)$ is a one-homogeneous critical point of $G$.   Then
\begin{equation}\label{ix}
\int_{B_{r}} \ln R \, \cof \nabla u \cdot \nabla \bar{u} \,dx = 2\pi k a^{2}r^{2}\ln r -  ak \int_{B_{r}} u \cdot e_{\scriptscriptstyle{R}}(k\theta)\,d\theta\,dR.
\end{equation}
\end{lemma}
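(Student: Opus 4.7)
The plan is to compute both sides in plane polar coordinates after first writing $\cof\nabla u\cdot\nabla\bar u$ in a form adapted to the polar frame. Since $\bar u(R,\theta)=aRe_{\scriptscriptstyle R}(k\theta)$ gives $\bar u_{,R}=ae_{\scriptscriptstyle R}(k\theta)$ and $\bar u_{,\tau}=ake_\theta(k\theta)$, I would polarize the determinant via $\cof\nabla u\cdot\nabla\bar u=\frac{d}{ds}\big|_{s=0}\det\nabla(u+s\bar u)$ and use the decomposition $\nabla\varphi=\varphi_{,R}\otimes e_{\scriptscriptstyle R}+\varphi_{,\tau}\otimes e_\theta$ together with the elementary identity $\det[e_{\scriptscriptstyle R}(k\theta),v]=e_\theta(k\theta)\cdot v$. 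This should produce the convenient splitting
\[
\cof\nabla u\cdot\nabla\bar u=a\,e_\theta(k\theta)\cdot u_{,\tau}+ak\,u_{,R}\cdot e_{\scriptscriptstyle R}(k\theta),
\]
which I would cross-check against $u=\bar u$, where both sides must equal $2\det\nabla\bar u=2a^2k$.

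With this split the integral becomes $I_1+I_2$, where $I_1=ak\int_{B_r}\ln R\,u_{,R}\cdot e_{\scriptscriptstyle R}(k\theta)\,dx$ and $I_2=a\int_{B_r}\ln R\,u_{,\tau}\cdot e_\theta(k\theta)\,dx$. For $I_2$ I would pass to polar coordinates, using $u_{,\tau}R\,dR\,d\theta=u_{,\theta}\,dR\,d\theta$, then integrate by parts in $\theta$; the boundary terms vanish by $2\pi$-periodicity, and $\tfrac{d}{d\theta}e_\theta(k\theta)=-ke_{\scriptscriptstyle R}(k\theta)$ converts $I_2$ into $ak\int_0^r\!\int_0^{2\pi}\ln R\,u\cdot e_{\scriptscriptstyle R}(k\theta)\,d\theta\,dR$. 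For $I_1$ I would integrate by parts in $R$, using $\int_0^rR\ln R\,u_{,R}\,dR=r\ln r\,u(r,\theta)-\int_0^r u(\ln R+1)\,dR$, then substitute the boundary condition $u(r,\theta)=are_{\scriptscriptstyle R}(k\theta)$ to pick out the term $2\pi ka^2r^2\ln r$ (via $\int_0^{2\pi}|e_{\scriptscriptstyle R}(k\theta)|^2\,d\theta=2\pi$). Adding $I_1$ and $I_2$, the $\ln R$-contributions cancel exactly, leaving $2\pi k a^2r^2\ln r-ak\int_{B_r}u\cdot e_{\scriptscriptstyle R}(k\theta)\,d\theta\,dR$.

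The main issue will be justifying the two integrations by parts for a general $u\in\sca_p$. The $R$-integration by parts needs $R\ln R\cdot u(R,\theta)\to 0$ as $R\to 0$, and the integral $\int_{B_r}\ln R\,\cof\nabla u\cdot\nabla\bar u\,dx$ itself must be given a meaning since $\ln R\notin L^\infty$. I would handle both issues with the same Hardy/BMO approximation device already invoked in Lemma \ref{froebe}(a): approximate $\nabla u$ in $\sch^1(\mathbb{R}^2)$ by mollifications $\nabla u_\eps$, use Fefferman--Stein duality $(\sch^1)^*=BMO$ with $\ln R\in BMO$ to pass to the limit in the left-hand side, and note that on the right-hand side $u_\eps(r,\theta)\to\bar u(r,\theta)$ in trace so the boundary contribution is stable. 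Once the smooth case is established, this density argument gives the identity in $\sca_p$ and the proof concludes.
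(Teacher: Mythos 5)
Your proposal is correct and follows essentially the same route as the paper: both express $\cof \nabla u \cdot \nabla \bar{u}$ in the polar frame (your polarization argument just rederives the paper's identity $\cof \nabla u \cdot \nabla \bar{u} = Ju_{,_{\scriptscriptstyle{R}}} \cdot \bar{u}_{,_{\tau}} - Ju_{,_{\tau}} \cdot \bar{u}_{,_{\scriptscriptstyle{R}}}$), split into two terms, integrate by parts in $R$ and in $\theta$, cancel the $\ln R$ bulk contributions, and use $u=\bar{u}$ on $\partial B_{r}$ to produce the term $2\pi k a^{2}r^{2}\ln r$. Your closing paragraph on justifying the integrations by parts via the $\sch^{1}$--BMO approximation of Lemma \ref{froebe}(a) is a reasonable extra layer of rigour that the paper's proof simply omits.
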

\begin{proof}Using the form of gradient given in \eqref{ravelI}, it is straightforward to check that 
\[ \cof \nabla u \cdot \nabla \bar{u} = Ju_{,_{\scriptscriptstyle{R}}} \cdot \bar{u}_{,_{\tau}} - Ju_{,_{\tau}} \cdot \bar{u}_{,_{\scriptscriptstyle{R}}}.\]
Inserting $\bar{u}=aRe_{\scriptscriptstyle{R}}(k\theta)$ and integrating gives
\begin{eqnarray*}\int_{B_{r}}\ln R \, \cof \nabla u \cdot \nabla \bar{u}\,dx & = & \int_{B_{r}} R\ln R \,Ju_{,_{\scriptscriptstyle{R}}} \cdot ae_{\scriptscriptstyle{R}}(k\theta) \,dR \,d\theta +{} \\
&  & {}  -ak\int_{B_{r}} \ln R \, Ju\cdot \et(k\theta)\,dR\,d\theta \\ 
& = & \int_{0}^{2\pi} \left[R \ln R Ju \cdot a\et(k \theta)\right]_{R=0}^{R=r}\,d\theta +{} \\
&  & {}  -ak\int_{B_{r}}(1+ \ln R) \, Ju\cdot \et(k\theta)\,dR\,d\theta +{} \\
&  & {}  + ak\int_{B_{r}} \ln R \, Ju\cdot \et(k\theta)\,dR\,d\theta \\
& = &  k\int_{0}^{2\pi} r^{2} \ln r \, a\et(k\theta) \cdot a\et(k\theta) \,d\theta {}+ \\
&  & {}+ ak \int_{B_{r}}u \cdot J\et(k\theta)\,dR\,d\theta \\
& = & 2 \pi k a^{2} r^{2} \ln r  -  ak \int_{B_{r}} u \cdot e_{\scriptscriptstyle{R}}(k \theta) \,dR \,d \theta.
\end{eqnarray*}
\end{proof}

\begin{lemma}\label{bach2a} Let $u$ and $\bar{u}$ be critical points of $G$ in $\sca_{p}$ and suppose that either
\begin{itemize}\item[(C1)] $\det \nabla u = \det \nabla \bar{u}$ a.e. in  $B_{r}$, or
\item[(C2)] $G(u)=G(\bar{u})$, $u$ is $C^{1}$ in a neighbourhood of $\partial B_{r}$ and $\gamma$ is homogeneous of degree $p$,  $p \neq 2$.  
\end{itemize}
Then
\begin{equation}\label{scheide}\int_{B_{r}} \ln R \, \det \nabla u \,dx = \int_{B_{r}} \ln R \, \det \nabla \bar{u} \,dx, \end{equation} 
and their common value is
\begin{equation}\label{8stringmandolin} \int_{B_{r}} \ln R \, \det \nabla \bar{u} \,dx = \pi  k  a^{2}  (2 r^{2} \ln r - r^2).\end{equation}
\end{lemma}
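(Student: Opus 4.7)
My approach is to dispatch case (C1) immediately and handle (C2) via a virial-type test, and then to compute the common value explicitly. Case (C1) is trivial: if $\det \nabla u = \det \nabla \bar u$ a.e.\ in $B_r$, then the two integrands in \eqref{scheide} agree pointwise.

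For case (C2), I would test the critical point identity for $u$ against $\varphi = \eta_{\eps} u$, where $\eta_{\eps}$ is a radial cut-off equal to $1$ on $B_{r-\eps}$ and vanishing on $\partial B_r$, using the $C^1$ regularity near the boundary (and the approximation strategy from the proof of Lemma \ref{regina}, supplemented by the Hardy--space/BMO machinery of Lemma \ref{froebe} to give the $\ln R$ term meaning). Expanding $\nabla(\eta_{\eps} u) = u\otimes \nabla\eta_{\eps} + \eta_{\eps}\nabla u$ and letting $\eps \to 0$, the critical point identity collapses to
\[
\int_{B_r} D\gamma(\nabla u)\cdot \nabla u \,dx + \lambda\int_{B_r}\ln R\,\cof\nabla u \cdot \nabla u\,dx = T(u),
\]
where $T(u) := \int_{\partial B_r}\bigl(D\gamma(\nabla u)\,e_R + \lambda \ln r\,\cof\nabla u\,e_R\bigr)\cdot u\,ds$ is the boundary contribution coming from $u\otimes \nabla \eta_{\eps}$ as it concentrates on $\partial B_r$.

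Three algebraic inputs now finish case (C2). First, the Euler identity $D\gamma(F)\cdot F = p\gamma(F)$ (valid because $\gamma$ is $p$-homogeneous) converts the first left-hand integral into $p I_1(u)$, where $I_1(u):=\int_{B_r}\gamma(\nabla u)\,dx$. Second, the $2\times 2$ identity $\cof F\cdot F = 2\det F$ converts the second left-hand integral into $2\lambda I_2(u)$, where $I_2(u):=\int_{B_r}\ln R\det\nabla u\,dx$. Third, by Lemma \ref{regina} and the boundary condition, $\nabla u = \nabla \bar u$ and $u = \bar u$ on $\partial B_r$, so $T(u) = T(\bar u)$. Writing the same identity for $\bar u$ and subtracting gives
\[
p\bigl(I_1(u)-I_1(\bar u)\bigr) + 2\lambda\bigl(I_2(u)-I_2(\bar u)\bigr) = 0,
\]
while the hypothesis $G(u) = G(\bar u)$ supplies the companion equation $\bigl(I_1(u)-I_1(\bar u)\bigr) + \lambda\bigl(I_2(u)-I_2(\bar u)\bigr) = 0$. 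The resulting $2\times 2$ linear system has determinant $(p-2)\lambda\neq 0$ exactly because $p \neq 2$, so $I_2(u) = I_2(\bar u)$, which is \eqref{scheide}.

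For the explicit value \eqref{8stringmandolin}, I would apply Lemma \ref{bach2b} with $u$ replaced by $\bar u$: the left-hand side is $2 I_2(\bar u)$ by $\cof F\cdot F = 2\det F$, and on the right $\int_0^{2\pi}\!\int_0^r \bar u\cdot e_R(k\theta)\,dR\,d\theta = \pi a r^2$ since $\bar u \cdot e_R(k\theta) = aR$. The main obstacle throughout is the limit passage in case (C2): one must ensure that the test-function substitution $\varphi = \eta_{\eps}u$ is legitimate for $u \in \sca_p$ (which requires $p$-growth of $\gamma$ to place $D\gamma(\nabla u)\in L^{p'}$) and that the logarithmic term survives the limit, a point where the Hardy space argument of Lemma \ref{froebe} is indispensable; everything downstream is routine algebra.
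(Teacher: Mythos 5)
Your treatment of (C1) and the heart of (C2) is essentially the paper's own argument in slightly different clothing. The paper uses a Green's-theorem identity (testing the stationarity of $u$ and of $\bar u$ against non-vanishing boundary data) together with Lemma \ref{regina} to arrive at exactly the relation your cut-off test $\varphi=\eta_{\eps}u$ produces, namely $\int_{B_{r}}D\gamma(\nabla u)\cdot\nabla u+2\lambda\ln R\,\det\nabla u\,dx=\int_{B_{r}}D\gamma(\nabla\bar u)\cdot\nabla\bar u+2\lambda\ln R\,\det\nabla\bar u\,dx$, the boundary contributions cancelling because $u=\bar u$ on $\partial B_{r}$ and, by Lemma \ref{regina}, $\nabla u=\nabla\bar u$ there; from that point both you and the paper invoke Euler's relation $D\gamma(F)\cdot F=p\gamma(F)$, the hypothesis $G(u)=G(\bar u)$, and $p\neq2$ (with $\lambda\neq0$) to extract \eqref{scheide}. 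So for \eqref{scheide} your proposal is correct and follows the paper's route, with only the implementation of the boundary identity (cut-off and limit versus Green's theorem plus approximation) differing.

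For the explicit value you take a slightly different path — Lemma \ref{bach2b} with $u=\bar u$ rather than the paper's direct use of $\det\nabla\bar u\equiv a^{2}k$ — but you stopped before the arithmetic, and finishing it exposes a discrepancy you should have flagged: your identity gives $2\int_{B_{r}}\ln R\,\det\nabla\bar u\,dx=2\pi ka^{2}r^{2}\ln r-ak\cdot\pi ar^{2}$, hence $\int_{B_{r}}\ln R\,\det\nabla\bar u\,dx=\pi ka^{2}\bigl(r^{2}\ln r-\tfrac{r^{2}}{2}\bigr)$, which is one half of the right-hand side of \eqref{8stringmandolin}. The same halved value results from the paper's own method, since $\int_{B_{r}}\ln R\,dx=\pi r^{2}\ln r-\tfrac{\pi r^{2}}{2}$, and it is the value effectively used in the proof of Proposition \ref{bachprop3}, where only $2\int_{B_{r}}\ln R\,\det\nabla\bar u\,dx=\pi ka^{2}(2r^{2}\ln r-r^{2})$ enters; so the constant displayed in \eqref{8stringmandolin} appears to carry a spurious factor of $2$. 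As written, however, your proposal asserts that the computation yields \eqref{8stringmandolin} when in fact it yields the halved value: carry the calculation to the end and state the corrected constant explicitly.
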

\begin{proof}If condition (C1) holds then \eqref{scheide} is immediate.   Therefore  assume that (C2) holds.     A version of Green's Theorem implies that 
{\small\[ \int_{B_{r}} (D\gamma(\nabla u) + \lambda \ln R\, \cof \nabla u) \cdot \nabla \bar{u} \,dx  = r \int_{0}^{2\pi} (D\gamma(\nabla u) + \lambda \ln r \, \cof \nabla u ) \cdot \bar{u} \otimes \er(\theta) \,d\theta,\]}
and similarly 
{\small\[ \int_{B_{r}} (D\gamma(\nabla \bar{u}) + \lambda \ln R\, \cof \nabla \bar{u}) \cdot \nabla u \,dx  = r \int_{0}^{2\pi} (D\gamma(\nabla \bar{u}) + \lambda \ln r \, \cof \nabla \bar{u}) \cdot \bar{u} \otimes \er(\theta) \,d\theta.\]}   

By Lemma \ref{regina}, we may assume that $\nabla u = \nabla \bar{u}$ on $\partial B_{r}$, so that the right-hand sides of the last two equations are equal.   
In particular, we can then take $u=\bar{u}$ in each  (since they hold as identities) and conclude that 
\[\int_{B_{r}} D\gamma(\nabla u)\cdot \nabla u + 2 \lambda \ln R\, \det \nabla u \,dx =    \int_{B_{r}} D\gamma(\nabla \bar{u})\cdot \nabla \bar{u} + 2\lambda \ln R\, \det \nabla \bar{u} \,dx.\]
Since $\gamma$ is assumed to be homogeneous of degree $p$,  it follows that
\[ D\gamma(\nabla u)  \cdot \nabla u = p\gamma(\nabla u),\]
and similarly for $\bar{u}$.  Hence
\[  \int_{B_{r}}\frac{p}{2}\gamma(\nabla u) + \lambda \ln R \, \det \nabla u \,dx = 
\int_{B_{r}}\frac{p}{2}\gamma(\nabla \bar{u}) + \lambda \ln R \, \det \nabla \bar{u} \,dx.\] 
Now we apply the hypothesis that  $G(u) = G(\bar{u})$,  which gives
\[\int_{B_{r}} \gamma(\nabla u) + \lambda \ln R \, \det \nabla u \,dx = 
\int_{B_{r}}\gamma(\nabla \bar{u}) + \lambda \ln R \, \det \nabla \bar{u} \,dx.\] 
Subtracting the two  equations and using the assumption $p \neq 2$ implies that
\[ \int_{B_{r}} \gamma(\nabla u) \,dx = \int_{B_{r}} \gamma(\nabla \bar{u}) \,dx,\]
which, since $G(u) = G(\bar{u})$, immediately implies equation \eqref{scheide}.   Finally,  it can be checked that the map $\bar{u}$ has constant Jacobian $a^{2}k$. Hence equation \eqref{8stringmandolin}.
\end{proof}

\begin{rem}Note that the proof of \eqref{scheide} under assumption $(C2)$ also implies that $D\gamma$ is `self-adjoint' on  critical points of $G$ in the sense that
\[ \int_{B_{r}} D\gamma(\nabla u) \cdot \nabla \bar{u} \,dx = \int_{B_{r}} D\gamma(\nabla \bar{u}) \cdot \nabla u \,dx.\]
This holds even when $\gamma$ is not assumed to be homogeneous of degree $p$.
\end{rem}

\begin{rem}  Under additional regularity assumptions, \cite{KS84,TaII} have studied the uniqueness problem for critical points of functionals involving a Lagrange multiplier.   The singularity associated with $\ln R$ prevents a direct application of their results to the functional $G$, but perhaps their methods could be adapted to work in this case.  
\end{rem}

We now give a uniqueness criterion for critical  points of $G$ in  $\sca_{p}$.

\begin{prop}\label{bachprop3}  Let $u$ and $\bar{u}$ be critical  points of $G$ and suppose that either (C1) or (C2) holds.   Then  
\begin{equation}\label{lute} \int_{B_{r}} u \cdot \er(k \theta) \,dR \,d\theta \leq \pi a r^{2}\end{equation}
with equality if and only if $u=\bar{u}$ a.e. in  $B_{r}$. 
\end{prop}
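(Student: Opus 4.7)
The strategy is to express the quantity $\int u\cdot e_{\scriptscriptstyle{R}}(k\theta)\,dR\,d\theta$ in terms of quantities that are already controlled by the three preceding results: Proposition~\ref{Bach4} (which provides a sign for $\int\ln R\,\det(\nabla u_1-\nabla u_2)\,dx$), Lemma~\ref{bach2a} (which pins down $\int\ln R\,\det\nabla u\,dx$ under either (C1) or (C2)), and Lemma~\ref{bach2b} (which rewrites $\int\ln R\,\cof\nabla u\cdot\nabla\bar u\,dx$ in terms of an integral of $u\cdot e_{\scriptscriptstyle{R}}(k\theta)$).

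First I would invoke the elementary $2\times 2$ algebraic identity
\[ \det(A-B)=\det A+\det B-\cof A\cdot B, \]
apply it pointwise to $A=\nabla u$ and $B=\nabla\bar u$, multiply by $\ln R$, and integrate over $B_r$. This produces the decomposition
\[ \int_{B_r}\ln R\,\det(\nabla u-\nabla\bar u)\,dx=\int_{B_r}\ln R\,\det\nabla u\,dx+\int_{B_r}\ln R\,\det\nabla\bar u\,dx-\int_{B_r}\ln R\,\cof\nabla u\cdot\nabla\bar u\,dx. \]

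Next I would feed each of these pieces into the previous results. By Proposition~\ref{Bach4} the left-hand side is $\le 0$, with equality if and only if $u=\bar u$ a.e.\ in $B_r$. Under the standing hypothesis that either (C1) or (C2) holds, Lemma~\ref{bach2a} gives the value of both $\int_{B_r}\ln R\,\det\nabla u\,dx$ and $\int_{B_r}\ln R\,\det\nabla\bar u\,dx$ explicitly in terms of $a$, $k$ and $r$. Finally, Lemma~\ref{bach2b} converts $\int_{B_r}\ln R\,\cof\nabla u\cdot\nabla\bar u\,dx$ into a linear combination of a known constant (depending only on $a,k,r$) and $ak\int_{B_r}u\cdot e_{\scriptscriptstyle{R}}(k\theta)\,dR\,d\theta$. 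Substituting everything into the decomposition collapses the $r^{2}\ln r$ contributions, and the inequality reduces, after dividing by $ak>0$, to \eqref{lute}.

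For the equality statement I would simply track the chain of equivalences: each step above (the algebraic identity and the two lemmas) is an identity, so equality in \eqref{lute} is equivalent to equality in the Proposition~\ref{Bach4} inequality, which the uniqueness clause of that proposition identifies with $u=\bar u$ a.e.\ in $B_r$. There is no real obstacle here beyond bookkeeping the $r^{2}\ln r$ cancellation carefully; the substantive mathematical content has already been shouldered by Proposition~\ref{Bach4} (strict monotonicity of $D\gamma$) and by the explicit evaluations in Lemmas~\ref{bach2a} and \ref{bach2b}. The one place care is required is verifying that the hypotheses of Lemma~\ref{bach2a} (and, via Lemma~\ref{regina}, Lemma~\ref{bach2b}) are indeed available for the pair $u,\bar u$ under either (C1) or (C2); this is immediate from the statements but worth spelling out.
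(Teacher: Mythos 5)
Your proposal is correct and follows essentially the same route as the paper: expand $\det\nabla(u-\bar u)$ bilinearly, evaluate the diagonal terms via Lemma \ref{bach2a} and the cross term via Lemma \ref{bach2b}, and then invoke the sign and equality clause of Proposition \ref{Bach4} before dividing by $ak>0$. No substantive differences to report.
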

\begin{proof} The aim is to apply Proposition \ref{Bach4}. Let $w=u-\bar{u}$ and calculate 
\begin{eqnarray*} \int_{B_{r}} \ln R \,\det \nabla w \,dx &  = &   \int_{B_{r}} \ln R \, \det \nabla u  + \ln R \, \det \nabla \bar{u}\,dx  +{} \\
&  &{} - \int_{B_{r}}\ln R \,\cof \nabla u \cdot \nabla \bar{u}\,dx \\
&  = & 2 \int_{B_{r}} \lambda \ln R \det \nabla \bar{u}\,dx + {} \\
&  & {} -\left(2\pi a^2r^{2}\ln r - ak  \int_{B_{r}} u \cdot \er(k\theta) \,dR \,  d\theta\right) \\
&=& \pi  k  a^{2}  (2 r^{2} \ln r - r^2) - 2\pi a^2r^{2}\ln r + {} \\
& & {}+ ak  \int_{B_{r}} u \cdot \er(k\theta) \,dR \,  d\theta\\
& = &  ak\left (\int_{B_{r}} u \cdot \er(k\theta) \,dR \,  d\theta - \pi  a r^2\right).\end{eqnarray*}
Here, Lemma \ref{bach2a} has been  used to replace $\int_{B_{r}}\ln R \,\det \nabla u \,dx$ with \newline $\int_{B_{r}}\ln R \,\det \nabla \bar{u} \,dx$, and the identity in  Lemma \ref{bach2b} has been applied to the term involving $\cof \nabla u \cdot \nabla \bar{u}$.
In summary,
\begin{equation}\label{CPE}\int_{B_{r}} \ln R\, \det \nabla w \, dx=  ak\left (\int_{B_{r}} u \cdot \er(k\theta) \,dR \,  d\theta - \pi  a r^2\right).\end{equation}
By Proposition \ref{Bach4}, the right-hand side of \eqref{CPE} satisfies
\[ak\left (\int_{B_{r}} u \cdot \er(k\theta) \,dR \,  d\theta - \pi  a r^2\right) \leq 0\]
with equality if and only if $u = \bar{u}$ a.e. in  $B_{r}$.   Since $ak > 0$, we can  now deduce the conclusion of the proposition.
\end{proof}

Note that the right-hand side of \eqref{lute} is $\int_{B_{R}} \bar{u} \cdot \er(k\theta) \,dR  \,d \theta$.  Therefore one interpretation of \eqref{lute} and Proposition \ref{bachprop3} is that among all possible critical points of $G$ in $\sca_{p}$ satisfying either (C1) or (C2) it is uniquely $\bar{u}$ which maximizes $\int_{B_{r}} u \cdot \er(k\theta)\,dR\,d\theta$.

\section{Critical points as minimizers}\label{calcvar}
 
Functionals of the form
\[G(u) = \int_{B_{r}} \gamma(\nabla u) + \lambda(x) \det \nabla u\,dx\]
are typically associated with variational problems involving constraints on the Jacobian $\det \nabla u$.  In such cases the function $\lambda(x)$ is a Lagrange multiplier, or pressure; it is not necessarily explicitly known \emph{a priori}, though it happens to be in our case, where $\lambda(x)$ is proportional to $\ln |x|$.   The fact that $\bar{u}$ constructed in Proposition \ref{freewill}  solves the Euler-Lagrange equation 
\begin{equation}\label{el5}\int_{B_{r}} D\gamma(\nabla \bar{u}) \cdot \nabla \varphi + \lambda \ln R \, \cof \nabla \bar{u} \cdot \nabla \varphi \,dx = 0 \ \ \forall \varphi \in C_{c}^{\infty}(B_{r},\mathbb{R}^{2})\end{equation}
and satisfies $\det \nabla \bar{u} = ak^{2}$ a.e. in $B_{r}$, i.e., is constant almost everywhere, suggests that the stationarity condition \eqref{el5} may well have a variational origin.  Could it be that $\bar{u}$ minimizes $G$ among maps in $\sca_{p}$ satisfying $\det \nabla u = ak^{2}$ almost everywhere?    We discuss this question below in Section \ref{con}.   

We point out in Section \ref{uncon} that $\bar{u}$ is in fact the global minimizer of the functional
\[ E(u)=\int_{B_{r}} k^{2}|u_{,_{\scriptscriptstyle{R}}}|^{2} + |u_{,_{\tau}}|^{2}\,dx\] 
among \emph{all} functions in $\sca_{2}$.   There are some remarkable similarities between this and an example given by Meyers in his work \cite{Meyers63} on reverse H\"{o}lder inequalities in elliptic regularity theory.  See the discussion following 
Proposition \ref{oomin} for details.

\subsection{The mapping $\bar{u}$ as a constrained minimizer}\label{con}

Let
\[\sca_{p}'=\{ u  \in W^{1,p}(B_{r},\mathbb{R}^{2}): \ \det \nabla u = 1   \ \textrm{a.e.}, \ u = \bar{u} \ \textrm{on} \ \partial B_{r}\}.\]
Here, we will choose the coefficient $a$ in $\bar{u} = aR \er (k \theta)$ so that $ak^{2} = 1$.  Hence $\bar{u}$ is also a member of $\sca_{p}'$.   When considered as a  functional on $\sca_{p}'$ it is clear that $G(u)$ differs from $\int_{B_{r}} \gamma(\nabla u)$ only by the constant term $\int_{B_{r}}\lambda \ln R \,dx$.   Therefore for the rest of this section we take
\[ G(u) =  \int_{B_{r}} \gamma(\nabla u) \,dx,\] 
where $\gamma$ will  be assumed to satisfy the main hypotheses leading to the result of  Lemma \ref{bach0}, as well as the $p-$growth hypothesis
\[ c_{1}|F|^{p} \leq \gamma(F) \leq c_{2}|F|^{p} + c_{3} \ \ \forall F \in  \mathbb{R}^{2 \times 2}\]
for positive constants $c_{1},c_{2}$ and $c_{3}$.  We assume that $p > 2$ for reasons explained later.

In  Theorem \ref{hybridlarch} below we show that $\bar{u}$ minimizes $G$ among all functions $u$ in $\sca_{p}'$ for which the set $u(B_{r})$ has the same `periodicity' as $\bar{u}$.  The notion of periodicity is made precise in  Lemma \ref{thomas}.   The technical methods we use are an adaptation of those presented in Sivaloganathan and Spector \cite{SSI,SSII};  we include details of proofs only to  keep  the paper self-contained and to avoid confusion with the scalings involved.  In  practice, if the reader is already familiar with \cite{SSI} then he or she should be able to deduce Theorem \ref{hybridlarch} from Lemma \ref{thomas} and \cite[Section 5]{SSI}.

We conjecture that $\bar{u}$ is in fact the global minimizer of $G$ in the full class $\sca_{p}'$, that is, among functions where no topological control is applied to the image set $u(B_{r})$.  The  techniques of \cite{SSI} do not seem to work in this case.

In keeping with  the notation  introduced in \cite{SSI,SSII}, let $C_{\scriptscriptstyle{R}}$ be the circle of radius $R$ and centre zero in $\mathbb{R}^{2}$.  

\begin{lemma}\label{thomas} Let  $\phi  \in W^{1,p}(B_{r};\mathbb{R}^{2})$ satisfy  $\phi(x) =  x$ for all  $x \in \partial B_{r}$ and $\det \nabla\phi = 1$ a.e. in $B_{r}$. 
Define
\[ \phi^{(k)}(R,\theta) = \phi\left(k^{-\frac{1}{2}}R,k\theta\right)\]
for all  $(R,\theta)$.   Then $\phi^{(k)} \in \sca_{p}'$,  and the image set satisfies
\[ \phi^{(k)}(B_{r}) \bigcup_{0<R\leq r}\bigcup_{0 \leq j \leq k-1}\phi\left(C_{k^{-\frac{1}{2}}\scriptscriptstyle{R}}\right).\]
That is, each set $\phi^{(k)}(C_{\scriptscriptstyle{R}})$ is covered $k$ times by the set  $\phi\left(C_{k^{-\frac{1}{2}\scriptscriptstyle{R}}}\right)$.
\end{lemma}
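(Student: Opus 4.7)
The plan is to verify in sequence the three defining properties of $\sca_p'$-membership for $\phi^{(k)}$---namely $W^{1,p}$ regularity, the constraint $\det\nabla\phi^{(k)}=1$ a.e., and the boundary trace $\phi^{(k)}|_{\partial B_r}=\bar u$---and then to read off the image-set statement directly from the construction.

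The computational heart of the argument is a polar chain rule for the composition $\phi^{(k)}=\phi\circ T$, where $T(R,\theta)=(k^{-1/2}R,k\theta)$ sends $B_r$ onto $B_{k^{-1/2}r}$ as a smooth $k$-to-$1$ cover away from the origin. Writing $DT$ in the orthonormal polar frames $(\er(\theta),\et(\theta))$ at $x$ and $(\er(k\theta),\et(k\theta))$ at $T(x)$, a short calculation (differentiating along $\partial/\partial R$ and $R^{-1}\partial/\partial\theta$) gives
\[ DT(x)=\begin{pmatrix} k^{-1/2} & 0 \\ 0 & k^{1/2}\end{pmatrix},\]
so that $\det DT=1$ and $|DT|^2=k+k^{-1}$. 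The chain rule $\nabla\phi^{(k)}=\nabla\phi(T)\,DT$ together with the hypothesis $\det\nabla\phi=1$ a.e.\ then immediately yields $\det\nabla\phi^{(k)}=1$ a.e.\ in $B_r$. The pointwise estimate $|\nabla\phi^{(k)}|\le|DT|\,|\nabla\phi\circ T|$, combined with the change-of-variables formula (in which, because $\det DT=1$ pointwise while $T$ is $k$-to-$1$ globally, the multiplicity $k$ enters as a prefactor), produces
\[ \int_{B_r}|\nabla\phi^{(k)}|^p\,dx\le (k+k^{-1})^{p/2}\,k\int_{B_{k^{-1/2}r}}|\nabla\phi|^p\,dy, \]
with an analogous and easier estimate for $\|\phi^{(k)}\|_{L^p}$, establishing the $W^{1,p}$ regularity.

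The boundary-trace step asks that $\phi^{(k)}(r,\theta)=\phi(k^{-1/2}r,k\theta)$ coincide with $\bar u(r,\theta)=k^{-1/2}r\,\er(k\theta)$; this reduces to $\phi$ being the identity on $\partial B_{k^{-1/2}r}=T^{-1}(\partial B_r)$. This is the step I expect to be the main obstacle, because the prescribed identity data sits on $\partial B_r$ rather than on the inner circle, and the matching therefore requires either viewing $\phi$ as an extension by the identity of a map initially posed on the smaller ball $B_{k^{-1/2}r}$, or an equivalent rescaling of the domains of $\phi$ and $\phi^{(k)}$. Once such an interpretation is fixed, a standard $W^{1,p}$ trace-continuity argument closes the matching of boundary values.

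Finally, the image-set assertion is essentially tautological: for each fixed $R\in(0,r]$, as $\theta$ varies over $[0,2\pi]$ the argument $k\theta$ wraps $[0,2\pi]$ onto itself exactly $k$ times, so the parameterised curve $\theta\mapsto\phi^{(k)}(R,\theta)$ traces $\phi(C_{k^{-1/2}R})$ with multiplicity $k$. Taking the union over $R\in(0,r]$ then recovers the displayed covering description of $\phi^{(k)}(B_r)$.
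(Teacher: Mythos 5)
Your treatment of the Jacobian and of the covering statement follows the paper's own route: the paper computes $\det\nabla\phi^{(k)}(R,\theta)=\det\nabla\phi(k^{-1/2}R,k\theta)=1$ directly in polar coordinates, which is exactly your factorisation through $DT=\mathrm{diag}(k^{-1/2},k^{1/2})$, and it likewise reads the $k$-fold covering of $\phi^{(k)}(C_{\scriptscriptstyle R})$ off the $\tfrac{2\pi}{k}$-periodicity in the angular variable. Your explicit $W^{1,p}$ bound with the multiplicity factor $k$ is more than the paper records (it simply asserts membership of the Sobolev class), and it is correct; note only that the relevant circle is $T(\partial B_r)=C_{k^{-1/2}r}$, not $T^{-1}(\partial B_r)$ as you wrote.

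The step you leave open --- the boundary trace --- is precisely the step the paper dispatches in one line, and your suspicion about it is well founded. The paper asserts that since $\phi^{(k)}(r,\theta)=\phi(k^{-1/2}r,k\theta)$, ``applying the hypothesis that $\phi$ is the identity on $\partial B_r$'' yields $\phi^{(k)}(r,\theta)=k^{-1/2}r\,\er(k\theta)=\bar u(r,\theta)$. But, as you observe, the composition samples $\phi$ on the inner circle $C_{k^{-1/2}r}$, where the hypotheses $\phi=\mathrm{id}$ on $\partial B_r$ and $\det\nabla\phi=1$ a.e.\ say nothing: an area-preserving twist $\phi(R,\theta)=R\,\er(\theta+h(R))$ with $h(r)=0$ but $h(k^{-1/2}r)\neq 0$ satisfies the hypotheses, yet $\phi^{(k)}|_{\partial B_r}$ is then a rotated copy of $\bar u$ rather than $\bar u$ itself. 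So what you flag as the ``main obstacle'' is a genuine gap --- one which the paper shares rather than resolves. To complete the lemma one must do something along the lines you sketch: read the hypothesis as prescribing the identity on $C_{k^{-1/2}r}$ (for instance, take $\phi$ to be the identity extension of a map posed on $B_{k^{-1/2}r}$), or rescale so that the composition samples $\phi$ on $\partial B_r$ (which alters the stated covering description), or compose with a fixed rotation of the target, which restores the boundary datum while leaving $\det\nabla\phi^{(k)}$, $|\nabla\phi^{(k)}|$ and the image sets unchanged, so that the later covering and minimisation arguments are unaffected. As it stands, however, your proposal stops exactly where the difficulty begins, so it does not yet prove the statement as formulated.
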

\begin{proof}A calculation shows that
\begin{eqnarray*}\det \nabla \phi^{(k)} &  = & \frac{1}{k^{\frac{1}{2}}R}J\phi,_{\scriptscriptstyle{R}}(k^{-\frac{1}{2}}R,k\theta) \cdot k \phi,_{\theta}(k^{-\frac{1}{2}}R,k\theta)\\
& = & J\phi,_{\scriptscriptstyle{R}}(k^{-\frac{1}{2}}R,k\theta) \cdot \frac{\phi,_{\theta}(k^{-\frac{1}{2}}R,k\theta)}{k^{-\frac{1}{2}}R}\\
&  = & \det \nabla \phi(\rho,\sigma)
\end{eqnarray*}
where $\rho=k^{-\frac{1}{2}}R$ and $\sigma = k\theta$.   Thus $\det \nabla \phi^{(k)} = 1$ a.e. in $B_{r}$.  Note also that, by definition, $\phi^{(k)}(r,\theta)=\phi(k^{-\frac{1}{2}}r,k\theta)$, so that on applying the hypothesis that $\phi$ is the identity on $\partial B_{r}$, it must be that $\phi^{(k)}(r,\theta) = k^{-\frac{1}{2}}r\er(k\theta)$, i.e., $\phi^{(k)}=\bar{u}$ on $\partial  B_{r}$.   Hence $\phi^{(k)}$ belongs to the class $\sca_{p}'$.

The last assertion of the lemma follows by noting that $\phi^{(k)}$ is by definition $\frac{2\pi}{k}-$periodic in the angular variable.  Therefore the images $\phi^{(k)}(C_{\scriptscriptstyle{R}})$ are formed by overlaying $k$ copies of the sets $\phi\left(C_{k^{-\frac{1}{2}}\scriptscriptstyle{R}}\right)$.

\end{proof}

Now let
\[\mathcal{D} =  \{\phi  \in W^{1,p}(B_{r};\mathbb{R}^{2}): \   \phi(x) =  x \ \forall x \in \partial B_{r}, \ \det \nabla\phi = 1 \  \textrm{a.e. in} \ B_{r}\}\] 
and define
\[A_{p}''=\{ u \in \sca_{p}': \ u=\phi^{(k)}, \ \phi \in \mathcal{D}\}.\]
By Lemma \ref{thomas},  $\sca_{p}'' \subset \sca_{p}'$, and note that, by inspection, $\bar{u}$ belongs to $\sca_{p}''$.

We show in the next two results that $\bar{u}$ is the global minimizer of $G$ among maps in the class $\sca_{p}''$.

\begin{lemma}\label{westernlarch} Let $u = \phi^{(k)} \in \sca_{p}''$.  Then for a.e. $R \in (0,r)$,
\[\int_{C_{\scriptscriptstyle{R}}}  \left| \phi^{(k)},_{\tau} \right| \,d\sch^{1} \geq 2\pi k^{\frac{1}{2}}R.\]
\end{lemma}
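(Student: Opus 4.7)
The plan is to reduce the required inequality for $\phi^{(k)}$ to a corresponding inequality for $\phi$ by exploiting the scaling in the definition $\phi^{(k)}(R,\theta)=\phi(k^{-1/2}R,k\theta)$, and then to invoke an isoperimetric argument of Sivaloganathan--Spector type on $\phi$.

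First I would show that for every $R\in(0,r)$,
\[\int_{C_{R}}|\phi^{(k)},_{\tau}|\,d\sch^{1}= k\int_{C_{\rho}}|\phi,_{\tau}|\,d\sch^{1},\qquad \rho:=k^{-1/2}R.\]
This is a direct change of variables: by the chain rule $\phi^{(k)},_{\theta}(R,\theta)=k\,\phi,_{\sigma}(\rho,k\theta)$, and since $\sch^{1}$ on $C_R$ is $R\,d\theta$, the left-hand side equals $\int_{0}^{2\pi}k|\phi,_{\sigma}(\rho,k\theta)|\,d\theta$. Substituting $\sigma=k\theta$ and using the $2\pi$-periodicity of $\phi$ in the angular variable yields the displayed identity. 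Thus it suffices to prove that for a.e.\ $\rho\in(0,k^{-1/2}r)$,
\[\int_{C_{\rho}}|\phi,_{\tau}|\,d\sch^{1}\geq 2\pi\rho,\]
for then $\int_{C_{R}}|\phi^{(k)},_{\tau}|\,d\sch^{1}\geq 2\pi k\rho=2\pi k^{1/2}R$.

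For the second step, since $p>2$ the map $\phi$ admits a continuous representative and, by the usual ACL characterisation of Sobolev functions coupled with Fubini, for a.e.\ $\rho$ the restriction $\phi|_{C_{\rho}}$ is absolutely continuous, so that $L_{\rho}:=\int_{C_{\rho}}|\phi,_{\tau}|\,d\sch^{1}$ is the true length of the closed rectifiable curve traced out by $\phi|_{C_{\rho}}$. The oriented area it encloses can be computed by Stokes/the degree formula and, because $\det\nabla\phi=1$ a.e., equals
\[\int_{B_{\rho}}\det\nabla\phi\,dx=\pi\rho^{2}.\]
Applying the classical isoperimetric inequality $L_{\rho}\geq 2\sqrt{\pi A_{\rho}}$ with $A_{\rho}=\pi\rho^{2}$ then gives $L_{\rho}\geq 2\pi\rho$, as required. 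Combining the two steps delivers the claimed bound for a.e.\ $R\in(0,r)$.

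The main obstacle is the second step: for a Sobolev map that is merely orientation-preserving a.e., one has to legitimise both the slicing statement (that the trace of $\phi$ on $C_{\rho}$ is a genuine rectifiable curve for a.e.\ $\rho$) and the identification of its enclosed area with $\pi\rho^{2}$ via degree theory. This is precisely where the hypothesis $p>2$ and the framework of \cite{SSI} are essential; once these are in place the isoperimetric inequality itself is classical. The rest of the argument is the routine scaling calculation described above.
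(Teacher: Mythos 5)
Your proposal is correct in outline and follows the same overall strategy as the paper: reduce to $\phi$ by the scaling identity $\int_{C_R}|\phi^{(k)},_{\tau}|\,d\mathcal{H}^1 = k\int_{C_\rho}|\phi,_{\tau}|\,d\mathcal{H}^1$ with $\rho=k^{-1/2}R$, compute the relevant area via the degree formula and $\det\nabla\phi=1$ a.e., and conclude with an isoperimetric inequality. Where you genuinely diverge is in the isoperimetric step. The paper works with sets rather than parametrized curves: it bounds $\int_{C_\rho}|\phi,_{\tau}|\,d\mathcal{H}^1$ below by $\mathcal{H}^1$ of the image curve (via \cite{MM73}, \cite{Sv88}), identifies that image, up to an $\mathcal{H}^1$-null set, with the reduced boundary of the topological image $\phi^{\mathrm{top}}(B_\rho)$ (via \cite{MS95}), applies the finite-perimeter isoperimetric inequality to that set, and then must show $\mathcal{L}^2(\phi^{\mathrm{top}}(B_\rho))=\pi\rho^2$; this last step is where $p>2$, the identity boundary data and Ball's global invertibility theorem \cite{Ba80} enter, since one needs the degree $d(\phi,B_\rho,\cdot)$ to take only the values $0$ and $1$ (otherwise $\int d\,dy=\pi\rho^2$ only gives $\mathcal{L}^2(\{d\neq 0\})\le\pi\rho^2$, which is the wrong direction). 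Your route applies the isoperimetric inequality directly to the closed curve $\phi|_{C_\rho}$ with its \emph{algebraic} (winding-number-weighted) area, which by the degree formula (\cite[Theorem 5.30]{FG95}, legitimate for a.e.\ $\rho$ since $p>2$) equals $\pi\rho^2$ regardless of whether the degree exceeds $1$; this buys you a real simplification, as the a.e.-injectivity machinery and the \cite{MS95} boundary identification become unnecessary. The one point you should make explicit is that the inequality you call classical must be the version valid for possibly self-intersecting closed rectifiable curves with area counted by winding number, $L^2\ge 4\pi\int_{\mathbb{R}^2} w(y)\,dy$ (Hurwitz's Wirtinger-inequality proof, or Banchoff--Pohl together with $w^2\ge w$ for integer $w$); the plain simple-curve statement does not literally apply here, since nothing guarantees $\phi|_{C_\rho}$ is injective. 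With that reference supplied, your argument closes and is, if anything, slightly more economical than the paper's.
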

\begin{proof} By a change of variables, we see that for all  $R \in  (0,r)$
\begin{equation}\label{period}\int_{C_{\scriptscriptstyle{R}}}  \left| \phi^{(k)},_{\tau} \right| \,d\sch^{1}   = k \int_{C_{\scriptscriptstyle{k^{-\frac{1}{2}}R}}} |\phi,_{\tau}|\,d\sch^{1}.\end{equation}
Let $\rho = k^{-\frac{1}{2}}R$.  Since $\phi \in W^{1,p}(B_{r})$, it is known that $\phi\arrowvert_{C_{\rho}} \in  W^{1,p}(C_{\rho})$ for a.e. $\rho \in  (0,r)$.  (See \cite[Theorem 5.16]{FG95},  for example.)  Thus by \cite{MM73}, or see \cite[Theorem 1]{Sv88}, 
\begin{equation}\label{mm73}  \int_{C_{\scriptscriptstyle{k^{-\frac{1}{2}}R}}}|\phi,_{\tau}|\,d\sch^{1} \geq \sch^{1}(C_{\rho})\end{equation}
for a.e. $\rho \in (0,k^{-\frac{1}{2}}r)$.   By  \cite[Part 2(a), Lemma 3.5]{MS95}, the measure-theoretic boundary of the topological image $\phi^{\textrm{top}}(B_{\rho})$ of $B_{\rho}$ under $\phi$ differs from $\phi(B_{\rho})$ by at most a set of $\sch^{1}$ measure zero.   We recall that $\phi^{\textrm{top}}(B_{\rho})$ is defined in terms of the Brouwer degree by 
\[ \phi^{\textrm{top}}(B_{\rho}) =  \{y \in  \mathbb{R}^{2} \setminus \phi(C_{\rho}): \ d(\phi,B_{\rho},y) \neq 0.\}\]
(See e.g. \cite{FG95} for details of the Brouwer degree.)
Thus
\begin{equation}\label{reduced} \sch^{1}(C_{\rho})  = \sch^{1}(\partial^{\ast}(\phi^{\textrm{top}}(B_{\rho})).\end{equation}
Following \cite{SSI}, the isoperimetric inequality in the plane gives
\begin{equation}\label{isoperim} \sch^{1}(\partial^{\ast}(\phi^{\textrm{top}}(B_{\rho}))) \geq 
2 \pi^{\frac{1}{2}}\left(\mathcal{L}^{2}(\phi^{\textrm{top}}(B_{\rho}))\right)^{\frac{1}{2}}.\end{equation}
If we can show that
\begin{equation}\label{equalarea}\mathcal{L}^{2}(\phi^{\textrm{top}}(B_{\rho})) = \mathcal{L}^{2}(\phi(B_{\rho}))\end{equation}
then the right-hand side of \eqref{isoperim} would become $2\pi \rho$.  Inserting this into \eqref{period}-\eqref{isoperim} would give the desired lower bound $2\pi k^{\frac{1}{2}}R$.   

To prove \eqref{equalarea}, we note that because $p > 2$ and because $\phi$ coincides with a homeomorphism on  $\partial B_{r}$,  by  \cite[Theorem  1]{Ba80} $\phi$ is continuous on  $\bar{B_{r}}$ and invertible a.e. in $B_{r}$.   Then \cite[Proposition 5.5(f)]{SSI}   applies to $\phi$, so  that $d(\phi,B_{\rho},y) =  1$ or $0$ for all  $y \notin \phi(C_{\rho})$ for each $\rho  \in (0,k^{-\frac{1}{2}}R)$.  We also see that the hypotheses of \cite[Theorem 5.30]{FG95} are satisfied, so  that  in  particular
\[ \int_{B_{\rho}} \det \nabla \phi \,dx = \int_{\mathbb{R}^{2}} d(\phi,B_{\rho},y) \,dy.\]   
The left-hand side is easily evaluated by setting $\det \nabla \phi = 1$ a.e., giving $\pi \rho^{2}$.   We claim that the right-hand side gives $\scl^{2}(\phi(B_{\rho}))$.  Firstly, we show that
$\phi(B_{\rho}) \setminus \phi^{\textrm{top}}(B_{\rho})$ is a set of $\scl^{2}-$measure zero.  (Note that $\phi^{\textrm{top}}(B_{\rho}) \subset \phi(B_{\rho})$ using the definition together with basic properties of the degree.)    Let $y \in \phi(B_{\rho}) \setminus \phi^{\textrm{top}}(B_{\rho})$.  We can assume that $y \notin \phi(\partial B_{\rho})$ since this set is null (by \cite[Theorem 5.32]{FG95}).  Then there is $x_{1} \in B_{\rho}$ such that $\phi(x_{1}) = y$, and, since $y$ is not in the topological image of $\phi$, $d(\phi,B_{\rho},y)=0$.  But $\phi$ agrees with the identity on $\partial B_{r}$, so $d(\phi,B_{r},y) = 1$ for all $y \in B_{r}$.   Using \cite[Chapter 2]{FG95}, for all $y \notin \phi(C_{\rho})$
\begin{eqnarray*} d(\phi,B_{r},y) &  = &  d(\phi,B_{r}\setminus C_{\rho},y)\\
& = & d(\phi,B_{\rho},y) + d(\phi, B_{r}\setminus \bar{B_{\rho}},y).\end{eqnarray*}
It follows that  $d(\phi, B_{r}\setminus \bar{B}_{\rho},y) = 1$, and so there must be $x_{2} \in B_{r}\setminus \bar{B_{\rho}}$ such that $y=\phi(x_{2})$.   Therefore $\phi$ is not $1-1$ at $y$, and since $\phi$ is a.e. $1-1$ it must be that $y$ belongs to a null set.   Hence,  since $d(\phi,B_{\rho},y)=1$ for all $y \in  \phi^{\textrm{top}}(B_{\rho})$, we have
\[d(\phi, B_{\rho}, y)  = \chi_{\phi(B_{\rho})}(y) \ \textrm{a.e.} \ y  \in  B_{r},\]
so  that 
\[ \int_{\mathbb{R}^{2}} d(\phi, B_{\rho}, y) \,dy = \mathcal{L}^{2}(\phi(B_{\rho})).\]
\end{proof}

We now apply Lemma \ref{westernlarch} to the functional $G$,  following the method of \cite[Remark 3.10]{SSI}

\begin{thm}\label{hybridlarch} Let $u \in \sca_{p}''$.  Then $G(u) \geq G(\bar{u})$.\end{thm}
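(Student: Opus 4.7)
The plan is to reduce $G$ fibrewise to a one-dimensional minimization on each circle $C_{R}$, combining the pointwise incompressibility constraint $\det \nabla u = 1$ with the isoperimetric-type lower bound provided by Lemma~\ref{westernlarch}. First I would use the polar decomposition $\nabla u = u,_{\scriptscriptstyle{R}} \otimes \er + u,_{\tau} \otimes \et$, which gives $|\nabla u|^{2} = |u,_{\scriptscriptstyle{R}}|^{2} + |u,_{\tau}|^{2}$ and, via $1 = \det \nabla u = Ju,_{\scriptscriptstyle{R}} \cdot u,_{\tau}$ together with Cauchy--Schwarz, $|u,_{\scriptscriptstyle{R}}|\cdot|u,_{\tau}| \geq 1$. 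Substituting $|u,_{\scriptscriptstyle{R}}|^{2} \geq |u,_{\tau}|^{-2}$ and using that $f$ is strictly increasing on $(0,\infty)$ (by Lemma~\ref{bach0}) produces the pointwise bound
\[
\gamma(\nabla u) \geq h(|u,_{\tau}|), \qquad h(s) := f\!\left(\sqrt{s^{2}+s^{-2}}\right),\ s>0.
\]

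The next step is to verify that $h$ is strictly convex on $(0,\infty)$ with its unique minimum at $s=1$. Writing $g(s) = \sqrt{s^{2}+s^{-2}}$ and differentiating the identity $g^{2} = s^{2}+s^{-2}$ twice, one obtains $g\,g'' = 2(3s^{4}+1)/[s^{4}(s^{4}+1)]>0$, so $g$ is strictly convex (and always positive); together with $f'>0$ and $f''>0$ on $(0,\infty)$ this gives $h''(s) = f''(g)(g')^{2}+f'(g)g''>0$. The symmetry $h(s)=h(s^{-1})$ places the unique minimum at $s=1$, and in particular $h'(s_{0})>0$ for $s_{0}:=k^{1/2}>1$.

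The third step is a fibrewise tangent-line (supporting-hyperplane) estimate at $s=s_{0}$. Convexity of $h$ gives
\[
\int_{0}^{2\pi} h(|u,_{\tau}(R,\theta)|)\, d\theta \geq 2\pi h(s_{0}) + h'(s_{0})\left(\int_{0}^{2\pi} |u,_{\tau}(R,\theta)|\, d\theta - 2\pi s_{0}\right).
\]
Lemma~\ref{westernlarch} supplies $\int_{0}^{2\pi} |u,_{\tau}(R,\theta)|\, d\theta \geq 2\pi k^{1/2}=2\pi s_{0}$ for a.e.\ $R\in(0,r)$, and because $h'(s_{0})>0$ the bracket is nonnegative. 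Multiplying by $R$ and integrating over $R\in(0,r)$ yields
\[
G(u) \geq 2\pi h(s_{0})\int_{0}^{r} R\, dR = \pi r^{2} f\!\left(\sqrt{k+k^{-1}}\right).
\]
Normalising $a$ so that $\det\nabla\bar{u}=1$ makes $|\nabla\bar{u}|^{2}\equiv k+k^{-1}$ a.e.; hence the right-hand side is precisely $G(\bar{u})$ and the inequality is proved.

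The delicate point is the convexity of $h$: although each factor in the chain-rule expression for $h''$ is nonnegative for visibly sensible reasons, propagating the strict convexity of $f$ through the nonlinear constraint $|F|^{2}\geq s^{2}+s^{-2}$ enforced by $\det\nabla u = 1$ requires the explicit computation of $g''$. Without it, the tangent-line argument---and hence the pointwise-to-integral passage on each circle---fails. The remaining ingredients (polar decomposition, Jensen's inequality, the isoperimetric bound of Lemma~\ref{westernlarch}, and the identification of $G(\bar{u})$) are routine within the Sivaloganathan--Spector framework cited by the author.
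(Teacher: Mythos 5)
Your proof is correct and rests on the same pillars as the paper's: the pointwise bound $|u,_{\scriptscriptstyle{R}}| \geq |u,_{\tau}|^{-1}$ extracted from $\det\nabla u = 1$ via Cauchy--Schwarz, the isoperimetric lower bound of Lemma~\ref{westernlarch}, and a convexity argument on each circle $C_{R}$. The organisation differs in one respect. The paper works in two stages: it first proves the $L^{1}$--gradient inequality $\int_{B_{r}}|\nabla u|\,dx \geq \int_{B_{r}}|\nabla \bar{u}|\,dx$ by applying Jensen for the convex function $s\mapsto (s^{2}+s^{-2})^{1/2}$ on each circle and then invoking its monotonicity on $[1,\infty)$ together with Lemma~\ref{westernlarch}; only afterwards does it bring in $f$ via a second application of Jensen over $B_{r}$, using that $f$ is convex and increasing. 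You instead fold $f$ in from the outset, working with the single composite $h(s)=f\bigl((s^{2}+s^{-2})^{1/2}\bigr)$, establishing its strict convexity by the explicit second-derivative computation $g\,g'' = 2(3s^{4}+1)/[s^{4}(s^{4}+1)]>0$ for the inner function $g$, and replacing the two Jensen steps by one supporting-hyperplane (tangent-line) estimate at $s_{0}=k^{1/2}$. Since the tangent-line estimate is precisely what underlies Jensen, the two routes are mathematically equivalent; yours is marginally shorter at the cost of the convexity computation for $h$, which the paper sidesteps by keeping $f$ and the square-root factor separate. Both arrive at the same lower bound $\pi r^{2}f\bigl(\sqrt{k+k^{-1}}\,\bigr)=G(\bar{u})$ after normalising $a=k^{-1/2}$ so that $\det\nabla\bar{u}=1$ (note the slip $a k^{2}=1$ in the paper's Section~\ref{con} should read $a^{2}k=1$, consistent with the Jacobian $a^{2}k$ computed in Lemma~\ref{bach2a}).
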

\begin{proof}
Let $u=\phi^{(k)}$ for some $\phi \in \mathcal{D}$.  Note that since $\det \nabla \phi^{(k)} = 1$ a.e. in $B_{r}$, it follows from \eqref{det} and the Cauchy-Schwarz inequality that  
\begin{equation}\label{bminormass}|\phi^{(k)},_{\scriptscriptstyle{R}} |\geq \frac{1}{|\phi^{(k)},_{\tau}|}\end{equation}
a.e. in  $B_{r}$.  Following \cite{SSI}, we  first  show
\[\int_{B_{r}}|\nabla u| \,dx \geq \int_{B_{r}}|\nabla \bar{u}|\,dx.\]
To  that end,  let
\[h(s) =  (s^2+s^{-2})^{\frac{1}{2}}\]
for all $s>0$.  Note that $h$ is convex on $(0,\infty)$ and strictly increasing on $[1,\infty)$.
By the coarea formula (\cite[Theorem 1, Section 3.4.2]{EG}), \eqref{bminormass}, and Jensen's inequality,
\begin{eqnarray}\nonumber \int_{B_{r}}|\nabla u|\,dx & = & \int_{0}^{r} \int_{C_{\scriptscriptstyle{R}}}\left(|\phi^{(k)},_{\scriptscriptstyle{R}}|^{2} +  |\phi^{(k)},_{\tau}|^{2}\right)^{\frac{1}{2}} \,d\sch^{1} \,dR \\
\nonumber & \geq & \int_{0}^{r} \int_{C_{\scriptscriptstyle{R}}}\left(|\phi^{(k)},_{\tau}|^{-2} +  |\phi^{(k)},_{\tau}|^{2}\right)^{\frac{1}{2}} \,d\sch^{1} \,dR \\
\nonumber & = & \int_{0}^{r} \int_{C_{\scriptscriptstyle{R}}}h\left(|\phi^{(k)},_{\tau}|\right) \,d\sch^{1} \,dR \\
\label{getty}&  \geq &  \int_{0}^{r} 2 \pi  R\, h\left(\dashint_{C_{\scriptscriptstyle{R}}}|\phi^{(k)},_{\tau}| \,d\sch^{1}\right) \,dR.
\end{eqnarray}
By  Lemma \ref{westernlarch}, 
\[ \dashint_{C_{\scriptscriptstyle{R}}}|\phi^{(k)},_{\tau}| \,d\sch^{1} \geq  k^{\frac{1}{2}}\]
for a.e. $R \in  (0,r)$.  Since $k$ is a positive integer,  it follows that
\[h\left(\dashint_{C_{\scriptscriptstyle{R}}}|\phi^{(k)},_{\tau}| \,d\sch^{1}\right) \geq h(k^{\frac{1}{2}}).\]
Inserting this into \eqref{getty} and  using the fact that $\bar{u}=k^{-\frac{1}{2}}R\er(k\theta)$ satisfies
\[|\nabla \bar{u}| = h(k^{\frac{1}{2}}),\]
we conclude  that 
\begin{equation}\label{lastone}\int_{B_{r}}|\nabla u| \,dx  \geq \int_{B_{r}}|\nabla \bar{u}|\,dx.\end{equation}
Finally, since $\gamma(F)=f(|F|)$, where $f$ is convex and increasing, Jensen's inequality combined  with  \eqref{lastone} gives
\begin{eqnarray*}\int_{B_{r}} \gamma(\nabla u) \,dx & = & \int_{B_{r}}f(|\nabla u|)\,dx \\
&  \geq & \pi r^{2} f\left(\dashint_{B_{r}} |\nabla u|\,dx \right) \\
&  \geq & \pi r^{2} f\left(\dashint_{B_{r}} |\nabla \bar{u}|\,dx \right) \\
&  =  & \pi r^{2} f(h(k^{\frac{1}{2}})) \\
&  = &  \int_{B_{r}} \gamma(\nabla \bar{u}) \,dx.\end{eqnarray*}
\end{proof}

\begin{rem}Note that the proofs above  also show that $\bar{u}$ is the unique minimizer of $G$ in $\sca_{p}''$.   This can easily be seen by examining the conditions for equality in inequalities \eqref{isoperim},\eqref{bminormass} and in the application \eqref{getty} of Jensen's inequality.\end{rem}

\subsection{The mapping $\bar{u}$ as a global minimizer}\label{uncon}

Define the functional $E$ by
\begin{equation}\label{E}E(u)=\int_{B_{r}} k^{2}|u_{,_{\scriptscriptstyle{R}}}|^{2} + |u_{,_{\tau}}|^{2}\,dx
\end{equation} 
and note that $E$ is well defined on the class $\sca_{2}$.  Let $\bar{u}$ be as in  Section \ref{con}. 

\begin{prop}\label{oomin}The map $\bar{u}$ solves the Euler-Lagrange equation 
\begin{equation} \label{smetena} 
\int_{B_{r}} u_{,_{\tau}} \cdot \varphi_{,_{\tau}} + k^{2} u_{,_{\scriptscriptstyle{R}}} \cdot \varphi_{,_{\scriptscriptstyle{R}}}\,dx = 0 \ \ \forall \varphi \in C_{c}^{\infty}(B_{r},\mathbb{R}^{2})
\end{equation}
associated with the functional $E$.   Consequently, $\bar{u}$ is the unique global minimizer of $E$ in the class $\sca_{2}$.
\end{prop}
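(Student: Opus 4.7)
The plan is to verify the Euler--Lagrange equation \eqref{smetena} for $\bar{u}$ by a direct polar integration-by-parts, and then to invoke the strict convexity of the quadratic functional $E$ to obtain uniqueness.

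First I would compute the polar derivatives
\[
\bar{u},_{\scriptscriptstyle{R}} \,=\, a\,\er(k\theta), \qquad \bar{u},_{\tau} \,=\, \tfrac{1}{R}\bar{u},_{\theta} \,=\, ak\,\et(k\theta),
\]
observing crucially that both are independent of $R$ and have constant Euclidean norms $a$ and $ak$ respectively. Substituting into \eqref{smetena} with $dx = R\,dR\,d\theta$, one factor of $R$ from the volume element cancels the $R^{-2}$ coming from $u,_{\tau}\cdot\varphi,_{\tau} = R^{-2}\bar{u},_{\theta}\cdot\varphi,_{\theta}$, reducing the equation to
\[
ak\int_0^r\!\!\int_0^{2\pi}\et(k\theta)\cdot \varphi,_{\theta}\,d\theta\,dR
\;+\; ak^2\!\int_0^{2\pi}\!\!\int_0^r \er(k\theta)\cdot \varphi,_{\scriptscriptstyle{R}}\,R\,dR\,d\theta \,=\, 0.
\]

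Next I would integrate by parts in each double integral separately. In the $\theta$-integral, use $\partial_{\theta}\et(k\theta) = -k\er(k\theta)$ together with the $2\pi$-periodicity of $\varphi$ in $\theta$ to produce $ak^2\int_0^r\!\int_0^{2\pi}\er(k\theta)\cdot\varphi\,d\theta\,dR$. In the $R$-integral, use the compact support of $\varphi$ in $B_r$ (and note that the boundary contribution at $R=0$ is killed by the factor $R$) to produce $-ak^2\int_0^{2\pi}\!\int_0^r \er(k\theta)\cdot \varphi\,dR\,d\theta$. The two contributions cancel, confirming that $\bar{u}$ solves \eqref{smetena}.

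For uniqueness, I would note that the integrand of $E$ is a strictly convex quadratic form in $\nabla u$ satisfying $k^2|u,_{\scriptscriptstyle{R}}|^2 + |u,_{\tau}|^2 \geq \min(1,k^2)\,|\nabla u|^2$. Hence $E$ is coercive and strictly convex on $\sca_2$, so by the direct method of the calculus of variations it admits a unique minimizer, which by convexity must be a weak solution of \eqref{smetena}. Since \eqref{smetena} is the weak form of a linear, uniformly elliptic system of constant coefficients, the associated Dirichlet problem with boundary data $\bar{u}|_{\partial B_r}$ admits a unique weak solution in $\sca_2$; as $\bar{u}$ is such a solution, it must coincide with the minimizer. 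There is no genuine obstacle here: the statement is essentially an algebraic identity dressed up as a PDE, made transparent by the fact that $|\bar{u},_{\scriptscriptstyle{R}}|$ and $|\bar{u},_{\tau}|$ are constants, so the only bookkeeping to watch is the periodicity--and--compact-support arithmetic in the two integrations by parts.
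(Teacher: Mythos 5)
Your verification that $\bar{u}$ solves \eqref{smetena} is essentially the paper's own computation: substitute $\bar{u},_{\scriptscriptstyle{R}}=a\,\er(k\theta)$ and $\bar{u},_{\tau}=ak\,\et(k\theta)$, integrate by parts once in $\theta$ (periodicity kills the boundary term) and once in $R$ (compact support and the factor $R$ at the origin kill the boundary terms), and watch the two contributions cancel. For uniqueness you take a genuinely different, and more roundabout, route: the paper simply expands $E(v)=E(\bar{u})+E(v-\bar{u})$ for any $v\in\sca_{2}$, the cross term vanishing because \eqref{smetena} applies (after a density argument) to the test function $v-\bar{u}\in W^{1,2}_{0}(B_{r},\mathbb{R}^{2})$, so $E(v)\geq E(\bar{u})$ with equality iff $v=\bar{u}$ a.e.; you instead run the direct method to get a unique minimizer, note it is a weak solution, and then invoke uniqueness of weak solutions of the Dirichlet problem to identify it with $\bar{u}$. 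That scheme works, but one justification in it is wrong and should be repaired: \eqref{smetena} is \emph{not} a constant-coefficient system. The coefficients are constant only in the moving frame $\{\er(\theta),\et(\theta)\}$; in Cartesian coordinates the integrand is $l_{1}(x)|u,_{x_{1}}|^{2}+l_{2}(x)\,u,_{x_{1}}\cdot u,_{x_{2}}+l_{3}(x)|u,_{x_{2}}|^{2}$ with $l_{j}\in L^{\infty}(B_{r})\setminus C^{0}(B_{r})$ -- indeed this discontinuity is the very point of the example, as the discussion following the proposition stresses. Fortunately nothing essential is lost: uniqueness for the Dirichlet problem follows from the coercivity you already recorded, $E(w)\geq \min(1,k^{2})\,\|\nabla w\|_{2;B_{r}}^{2}$, by testing the difference of two weak solutions against itself (after the same density step), which is exactly the energy argument the paper packages as the identity $E(v)=E(\bar{u})+E(v-\bar{u})$. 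With that correction your proof is sound; the paper's version buys the same conclusion in one line, without needing existence via the direct method at all.
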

\begin{proof}By definition, the Euler-Lagrange equation for $\bar{u}$ is
\[ \partial_{\eps}\arrowvert_{\eps = 0} E(\bar{u} + \eps \varphi) = 0,\]
where $\varphi$ is a smooth test function with compact support in $B_{r}$. 
A standard argument yields \eqref{smetena}.   To see that $\bar{u}$ solves it we must have
\[\int_{B_{r}} k^{\frac{1}{2}}\et(k\theta) \cdot \varphi_{,_{\tau}} + k^{\frac{3}{2}} 
\er(k \theta) \cdot \varphi_{,_{\scriptscriptstyle{R}}}\,R \,dR d \theta  = 0\]
for all such $\varphi$.  Integrating by parts with respect to $\theta$ in the first term and with respect to $R$ in the second, we require
\[\int_{B_{r}} k^{\frac{3}{2}}\er(k\theta) \cdot \varphi \,d\theta \,dR + k^{\frac{3}{2}}\int_{0}^{2\pi}\left([R\varphi \cdot \er(k\theta)]_{R=0}^{R=1} - \int_{0}^{1} \varphi \cdot \er(k\theta) \,dR\right) \,d\theta = 0.\]
Since $\varphi$ has compact support in $B_{r}$ and is bounded at $0$, the second term vanishes.   The remaining terms cancel.

To see that $\bar{u}$ is the unique global minimizer of $E$ in $\sca_{2}$ note that by \eqref{smetena} and a simple approximation, 
\[ E(v) =  E(\bar{u})  + E(v - \bar{u})\]
holds for any $v \in \sca_{2}$. 
Thus $E(v) \geq E(\bar{u})$ with  equality if and only if $E(v-\bar{u})=0$, i.e. if and only if $v=\bar{u}$ a.e. in $B_{r}$.   
\end{proof}

At  first sight it appears that Proposition \ref{oomin} provides us with a non-smooth minimizer of a strongly uniformly elliptic functional $E$.   But when the integrand 
\[ W(u_{,_{\scriptscriptstyle{R}}},u_{,_{\tau}}) =  k^{2}|u_{,_{\scriptscriptstyle{R}}}|^{2} + |u_{,_{\tau}}|^{2}\]
appearing in $E(u)$ is expressed in terms of the derivatives $u_{,_{x_{1}}}$ and $u_{,_{x_{2}}}$ we see that
\[W(u_{,_{\scriptscriptstyle{R}}},u_{,_{\tau}})  = \widetilde{W}(x,u_{,_{x_{1}}}, u_{,_{x_{2}}}),\]
where
\[\widetilde{W}(x,u_{,_{x_{1}}}, u_{,_{x_{2}}}) = l_{1}(x)|u_{,_{x_{1}}}|^{2} + l_{2}(x) u_{,_{x_{1}}} \cdot u_{,_{x_{2}}} + l_{3}(x)|u_{,_{x_{2}}}|^{2}\]
and
\begin{eqnarray*}
l_{1}(x) & = & (k^{2} - 1)\cos^{2}\theta +1 \\
l_{2}(x) &  =  & 2(k^2 - 1) \sin \theta \cos \theta \\
l_{3}(x) & = & (k^{2} - 1) \sin^{2}\theta + 1.
\end{eqnarray*}
In particular, the coefficients $l_{j}$ belong to $L^{\infty}(B_{r})\setminus C^{0}(B_{r})$, so that even though $\widetilde{W}$ is strongly uniformly elliptic in the sense that
\[ |\Pi|^{2} \leq \frac{\partial^{2}\widetilde{W}(x,F)}{\partial F_{ij} \partial F_{st}}\Pi_{ij}\Pi_{st} \leq k^{2} |\Pi^{2}| \ \ \forall F, \Pi \in \mathbb{R}^{2 \times 2}, \ \forall x \in B_{r},\]
classical regularity theory (see e.g. \cite[Chapter III, Theorem 3.1]{Gi83}) predicts only that weak solutions of \eqref{smetena} should, depending on the setting, belong to $C^{0,\mu}(B_{r})$ for some $0 <  \mu < 1$.  Crucially, we do not expect higher regularity of weak solutions when the coefficients are not continuous, as they plainly are not in this case.  Note that $\bar{u}$ is Lipschitz continuous, and so lies in  $C^{0, \mu}(B_{r})$ for every $\mu \in  (0,1)$.  Thus the example here is consistent with established theory.    We also remark that this example is consistent with Morrey's more powerful result \cite[Theorem 4.3.1]{Mo}.  (Or see \cite[Chapter V, Corollary 3.1]{Gi83}.)

Finally, we remark that the system \eqref{smetena} and its solution $\bar{u}$ bear a striking resemblance to one constructed by Meyers in \cite{Meyers63}.  His example, which is discussed below, showed that the improved regularity of elliptic systems with merely $L^{\infty}$ coefficients is controlled by the `strength of ellipticity' of the system.   We refer the reader to \cite{Meyers63} or \cite[Chapter V, Theorem 2.5]{Gi83} for further  details.  Meyers considered the following scalar equation:
\begin{equation}\label{arpeggiata} (au_{,_{x_{1}}}+bu_{,_{x_{2}}})_{,_{x_{1}}} + (bu_{,_{x_{1}}}+cu_{,_{x_{2}}})_{,_{x_{2}}} = 0
\end{equation} 
for $u: \mathbb{R}^2 \to \mathbb{R}$,
and where
\begin{eqnarray*} a(x) & = &  \cos^{2}\theta + \mu^{2} \sin^{2}\theta \\
b(x) & = &  (1-\mu^2)\sin \theta \cos \theta \\
c(x) & = & \sin^{2}\theta + \mu^{2} \cos^{2}\theta. 
\end{eqnarray*}
Here, $\mu \in  (0,1)$ is a constant.   If we let $u=(u_{1}, u_{2})^{T}$ be such that both $u_{1}$ and $u_{2}$ solve \eqref{arpeggiata} then  the weak form of the resulting system is, on $B_{1}$, say, 
\begin{equation}\label{tarantella}
\int_{B_{1}}\nabla u ^{T} A \cdot \nabla \varphi \,dx = 0 \ \ \ \forall \varphi \in  C_{c}^{\infty}(B_{1}; \mathbb{R}^{2}).
\end{equation}
The $2 \times 2$ matrix $A$ is given by 
\begin{displaymath} A = \left(\begin{array}{l l} 1 & 0 \\ 0 & \mu^2\end{array}\right)
\end{displaymath}
in the basis $\mathcal{B} = \{\er \otimes \er,\ \er \otimes \et, \ \et \otimes \er, \ \et  \otimes \et\}$.  It can be checked that $u_{\mu}=R^{\mu}\er(\theta)$ solves \eqref{tarantella} for $0 < \mu  \leq 1$.   Now compare the weak form of \eqref{smetena}:  in the same coordinate system it is 
\[\int_{B_{r}} \nabla u  \bar{A} \cdot \nabla \varphi \,dx = 0 \ \ \ \forall \varphi \in  C_{c}^{\infty}(B_{1}; \mathbb{R}^{2}),\]
where the $2 \times 2$ matrix $\bar{A}$ is given by 
\begin{displaymath} \bar{A} = \left(\begin{array}{l l} k^{2} & 0 \\ 0 & 1 \end{array}\right).
\end{displaymath}
We can see that Meyers used the same mechanism to ensure that $a,b,c$ belong to $L^{\infty}(B_{1})\setminus C^{0}(B_{1})$.


\begin{thebibliography}{99}
\bibitem{Ba80} J. M. Ball (1981). Global invertibility of Sobolev functions and the interpenetration of matter. Proceedings of the Royal Society of Edinburgh: Section A Mathematics, 88 , pp 315--328.
\bibitem{Be11} Jonathan J. Bevan (2010). On one-homogeneous solutions to elliptic systems with spatial variable dependence in two dimensions. Proceedings of the Royal Society of Edinburgh: Section A Mathematics, 140 , pp 449--475.
\bibitem{CLMS} R. Coifman,  P.-L. Lions, Y. Meyer, and S. Semmes. Compensated compactness and Hardy spaces. J. Math. Pures Appl. (9) 72 (1993), no. 3, pp 247--286.
\bibitem{Da08} B. Dacorogna. Direct methods in the calculus of variations. Second edition. Applied Mathematical Sciences, 78. Springer, New York, 2008.
\bibitem{DG68} E.  De Giorgi.  Un esempio di estremali discontinue per un problema variazionale di tipo ellittico. Boll. Un. Mat. Ital. (4)  1  1968 135--137.
\bibitem{EG} L. C. Evans and R. F. Gariepy.  Measure theory and fine properties of functions. Studies in Advanced Mathematics. CRC Press, Boca Raton, FL, 1992.
\bibitem{FS} C. Fefferman and E. Stein. $H^{p}$ spaces of several variables. Acta Math. 129 (1972), no. 3-4, 137--193.
\bibitem{FG95} I. Fonseca and W. Gangbo.   Degree theory in analysis and applications. Oxford Lecture Series in Mathematics and its Applications, 2. Oxford Science Publications. The Clarendon Press, Oxford University Press, New York, 1995.
\bibitem{Gi83} M. Giaquinta.  Multiple integrals in the calculus of variations and nonlinear elliptic systems. Annals of Mathematics Studies, 105. Princeton University Press, Princeton, NJ, 1983.
\bibitem{HLN}  W.  Hao, S. Leonardi and J. Ne\v{c}as.  An example of irregular solution to a nonlinear Euler-Lagrange elliptic system with real analytic coefficients. Ann. Scuola Norm. Sup. Pisa Cl. Sci. (4) 23 (1996), no. 1, 57--67. 
\bibitem{KS84} R. Knops and C. Stuart.
Quasiconvexity and uniqueness of equilibrium solutions in nonlinear elasticity.
Arch. Rational Mech. Anal. 86 (1984), no. 3, 233--249. 
\bibitem{MM73}  M. Marcus and V.J. Mizel.  Transformations by functions in Sobolev spaces and lower semicontinuity for parametric variational problems. Bull. Amer. Math. Soc. 79 (1973), 790--795. 
\bibitem{Meyers63}  N. G. Meyers.  An $L^{p}$e-estimate for the gradient of solutions of second order elliptic divergence equations. Ann. Scuola Norm. Sup. Pisa (3) 17 1963 189--206.
\bibitem{Mo} C. B. Morrey.   Multiple Integrals in the Calculus of Variations.  Classics in Mathematics, Springer, 2008.
\bibitem{Mu90}  S. M\"{u}ller.  ${\rm Det}={\rm det}$. A remark on the distributional determinant. C. R. Acad. Sci. Paris S�r. I Math. 311 (1990), no. 1, 13--17.
\bibitem{MS95}  S. M\"{u}ller and S. Spector. An existence theory for nonlinear elasticity that allows for cavitation. Arch. Rational Mech. Anal. 131 (1995), no. 1, 1--66.
\bibitem{MS03} S. M\"{u}ller and V. \v{S}ver\'{a}k. Convex integration for Lipschitz mappings and counterexamples to regularity.  Ann. of Math. (2)  157  (2003),  no. 3, 715--742.
\bibitem{Ne75} J. Ne\v{c}as.  Example of an irregular solution to a nonlinear elliptic system with analytic coefficients and conditions for regularity.  Theory of nonlinear operators (Proc. Fourth Internat. Summer School, Acad. Sci., Berlin, 1975),  pp. 197--206.
\bibitem{Ph02} D. Phillips.  On one-homogeneous solutions to elliptic systems in two dimensions.  C. R. Math. Acad. Sci. Paris  335  (2002),  no. 1, 39--42. 
\bibitem{Rockafellar} R. T.  Rockafellar.   Convex Analysis.   Princeton  Landmarks in Mathematics and Physics, 1997.
\bibitem{SSI} J. Sivaloganathan and S. Spector.   On the symmetry of energy-minimising deformations in nonlinear elasticity. I. Incompressible materials. Arch. Ration. Mech. Anal. 196 (2010), no. 2, 363--394.
\bibitem{SSII}  J. Sivaloganathan and S. Spector.   On the symmetry of energy-minimising deformations in nonlinear elasticity. II. Compressible materials. Arch. Ration. Mech. Anal. 196 (2010), no. 2, 395--431.
\bibitem{St} E. Stein. Singular integrals and differentiability properties of functions.  Princeton University Press, 1970. 
\bibitem{Sv88} V. \v{S}ver\'{a}k.  Regularity properties of deformations with finite energy.  Arch. Rational Mech. Anal. 100 (1988), no. 2, 105--127. 
\bibitem{SY00} V. \v{S}ver\'{a}k and X. Yan.  A singular minimizer of a smooth strongly convex functional in three dimensions.  Calc. Var. Partial Differential Equations  10  (2000),  no. 3, 213--221.
\bibitem{SY02}V. \v{S}ver\'{a}k and X. Yan.  Non-Lipschitz minimizers of smooth uniformly convex functionals.  Proc. Natl. Acad. Sci. USA  99  (2002),  no. 24, 15269--15276
\bibitem{Sz04}L. Sz\'{e}kelyhidi, Jr. The regularity of critical points of polyconvex functionals.  Arch. Ration. Mech. Anal.  172  (2004),  no. 1, 133--152.
\bibitem{TaII}  M. Shahrokhi-Dehkordi and A.  Taheri.  Quasiconvexity and uniqueness of stationary points on a space of measure preserving maps. J. Convex Anal. 17 (2010), no. 1, 69--79.
\end{thebibliography}
\end{document}